\def\NAT@def@citea{\def\@citea{\NAT@separator}}
\theoremstyle{plain}
\theoremstyle{definition}
\theoremstyle{remark}
\newcommand{\C}{\mathbb{C}}
\newcommand{\efe}{\mathbb{F}}
\newcommand{\FF}{\mathbb{F}}
\newcommand{\F}{\mathbb{F}}
\newcommand{\la}{\lambda}
\def\rank{\mathop{\rm rank}\nolimits}
\def\nrank{\mathop{\rm nrank}\nolimits}
\newcommand{\wh}{\widehat}
\newtheorem{theo}{Theorem}[section]
\newtheorem{deff}[theo]{Definition}
\newtheorem{prop}[theo]{Proposition}
\newtheorem{lem}[theo]{Lemma}
\newtheorem{rem}[theo]{Remark}
\newtheorem{example}[theo]{Example}
\DeclareMathOperator{\diag}{diag}
\DeclareMathOperator{\rev}{rev}
\begin{document}

\articletype{ }

	\title{Block Full Rank Linearizations of Rational Matrices}

\author{
\name{Froil\'{a}n M. Dopico\textsuperscript{a}, Silvia Marcaida\textsuperscript{b}, Mar\'{i}a C. Quintana\textsuperscript{a} and  Paul~Van~Dooren\textsuperscript{c} 	\thanks{CONTACT Froil\'{a}n M. Dopico. Email: dopico@math.uc3m.es. Silvia Marcaida. Email: silvia.marcaida@ehu.eus. Mar\'{i}a C. Quintana. Email: maquinta@math.uc3m.es. Paul~Van~Dooren. Email: paul.vandooren@uclouvain.be}}
\affil{\textsuperscript{a}Departamento de Matem\'aticas, Universidad Carlos III de Madrid, Avda. Universidad 30, 28911 Legan\'es, Spain; \textsuperscript{b}Departamento de Matem\'{a}ticas,
	Universidad del Pa\'{\i}s Vasco UPV/EHU, Apdo. Correos 644, Bilbao 48080, Spain;
\textsuperscript{c}Department of Mathematical Engineering, Universit\'{e} catholique de Louvain, Avenue Georges Lema\^itre 4, B-1348 Louvain-la-Neuve, Belgium}
}

\maketitle

\begin{abstract}
 Block full rank pencils introduced in [Dopico et al., Local linearizations of rational matrices with application to rational approximations of nonlinear eigenvalue problems, Linear Algebra Appl., 2020] allow us to obtain local information about zeros that are not poles of rational matrices. In this paper we extend the structure of those block full rank pencils to construct linearizations of rational matrices that allow us to recover locally not only information about zeros but also about poles, whenever certain minimality conditions are satisfied. In addition, the notion of degree of a rational matrix will be used to determine the grade of the new block full rank linearizations as linearizations at infinity. This new family of linearizations is important as it generalizes and includes the structures appearing in most of the linearizations for rational matrices constructed in the literature. In particular,  this theory will be applied to study the structure and the properties of the linearizations in [P. Lietaert et al., Automatic rational approximation and linearization of nonlinear eigenvalue problems, submitted].
\end{abstract}

\begin{keywords}
rational matrix, rational eigenvalue problem, nonlinear eigenvalue problem, linearization, polynomial system matrix, rational approximation, block full rank linearization.
\end{keywords}

\begin{amscode}65F15, 15A18, 15A22, 15A54, 93B18, 93B20, 93B60 \end{amscode}

\section{Introduction}

A rational matrix $R(\la)$ is a matrix whose entries are quotients of polynomials in the scalar variable $\la$, i.e., rational functions. Zeros and poles are among the most interesting quantities attached to a rational matrix \cite{Kailath}, both from theoretical and applied points of view. The Smith--McMillan form of rational matrices \cite{McMi2} is the classical way to define their poles and zeros, together with their partial multiplicities. However, it is well-known that the Smith--McMillan form is not convenient for computing numerically poles and zeros \cite{vd1981}.

The problem of determining the pole and zero structure of rational matrices appears in several applications. Many classic problems in linear systems and control theory can be posed in terms of rational matrices \cite{Kailath,Rosen,Vard} and are related to the computation of their zeros and poles \cite{vd1981}. Currently, the computation of the zeros of rational matrices is playing a fundamental role in the very active area of Nonlinear Eigenvalue Problems (NLEPs) \cite{guttel-tisseur-2017}, either because they appear directly in rational eigenvalue problems (REPs) modeling real-life problems \cite{mehrmanvoss2004} or because other NLEPs are approximated by REPs \cite{Saad,guttel-tisseur-2017,nlep,automatic,lu-huang-bai-su-2015,van-beeumen-et-al-2018}. Recall that for a regular rational matrix $R(\la)$, the REP consists in computing numbers $\la_0$ (eigenvalues) and nonzero vectors $v$ (eigenvectors) such that $\la_0$ is not a pole of any of the entries of $R(\la)$ and $R(\la_0) v = 0$. This is equivalent to say that $\lambda_0$ is a zero of $R(\la)$ but not a pole.

One of the most reliable methods for computing the zeros and poles of a rational matrix  $R(\la)$ is via linearizations. This approach is based on constructing a matrix pencil $L(\la)$, i.e., a matrix polynomial of degree at most $1$, containing the pole and zero information of $R(\la)$ and then applying to $L(\la)$ backward stable eigenvalue algorithms, as \cite{moler-stewart,vd1979} for problems of moderate size, or Krylov methods adapted to the structure of $L(\la)$ in the large-scale setting \cite{dopico-pizarro,nlep}. The pencil $L(\la)$ is called a linearization of $R(\la)$. This classical approach was introduced at least in the late 1970s \cite{vd1981,VeDoka79} and has been revisited recently, mainly motivated by research on NLEPs and the reference \cite{su-bai-2011}. Thus, a first formal definition of linearization of a rational matrix was proposed in \cite{AlBe16}. A different definition was introduced in \cite{strong}, together with the first formal definition of strong linearization, i.e., a pencil that allows to recover both the finite and infinite pole and zero structure of $R(\la)$.

In addition to formal definitions, recent research on linearizations of rational matrices $R(\la)$ has produced new classes of strong linearizations that can be easily constructed from the polynomial part of $R(\la)$ and a minimal state-space realization of its strictly proper part. Also, the study of the recovery properties of these linearizations has received considerable attention. References in these lines include \cite{AlBe16-2,AlBe18,minimal,DasAl19laa,DaAl19_2,DaAl20,DoMaQu19}. Among the new classes of strong linearizations we mention the strong block minimal bases linearizations introduced in \cite[Theorem 5.11]{strong} since they form a very wide general family. For instance, they include as particular cases the Fiedler-like linearizations (modulo permutations) \cite[Section 8]{minimal}, are closely connected to other classes of linearizations developed in the literature \cite{DoMaQu19} and are valid for general rectangular rational matrices.

Despite the intense activity described in the previous paragraph, there are pencils that have been used in influential references as \cite{nlep,automatic} for solving numerically REPs that approximate NLEPs which do not satisfy the definitions of linearization of rational matrices given in \cite{AlBe16,strong}. The reason is that these definitions focus on pencils that allow to recover the complete pole and zero structure of rational matrices, while in \cite{nlep,automatic} only the eigenvalue information in a certain subset of the complex plane is necessary. This has motivated the development in \cite{local} of a new theory of linearizations of rational matrices in a local sense. These linearizations are pencils that preserve the structure of zeros and poles of the corresponding rational matrix in a particular subset of the underlying field and/or at infinity. Thus, such linearizations are useful in applications where only the zero and pole information in a particular region is needed. Apart from a new definition, a specific family of local linearizations of rational matrices is also introduced in \cite[Section 5]{local}, that are called block full rank pencils. These pencils are particular instances of local linearizations that allow to recover the information about zeros that are not poles of rational matrices.

The main contribution of this work is to extend the block full rank pencils from \cite{local} to a much larger family of pencils, that allow also to recover pole information under minimality conditions. This new family is called block full rank linearizations, where we use a name similar to that in \cite{local} for emphasizing the connection between both concepts. We remark that block full rank linearizations can also be seen as a wide nontrivial generalization of the strong block minimal bases linearizations introduced in \cite[Theorem 5.11]{strong}.
A second contribution of this paper is to apply block full rank linearizations to study the precise properties of the pencils used in \cite{automatic}.

The paper is organized as follows. Some preliminaries are presented in Section \ref{sec.prelim}. In Section \ref{sec-full-rank-lin}, we introduce block full rank linearizations at finite points and at infinity. The notion of degree of a rational matrix will be used to determine the grade of the new linearizations as linearizations at infinity. In Subsection \ref{SB_asBF}, we will see that the block full rank linearizations extend the structure of the strong block minimal bases linearizations in \cite{strong}. Finally, in Section \ref{automaticlin}, we will show how to view the linearizations in \cite{automatic} as block full rank linearizations, so that pole information can also be recovered from them, and provide sufficient conditions under which the linearizations in \cite{automatic} are minimal. Block full rank linearizations allow also to determine the precise properties of other linearizations appearing in the literature as those in \cite{nlep,Saad,su-bai-2011}. Conclusions are presented in Section \ref{sect:con}.

\section{Preliminaries} \label{sec.prelim}

Let $\F$ be an algebraically closed field that does not include infinity. $\efe[\la]$ denotes the ring of polynomials with coefficients in $\efe,$ and $\efe(\la)$ the field of rational functions over $\F[\la]$. $\efe^{p\times m}$, $\efe[\la]^{p\times m}$ and $\efe(\la)^{p\times m}$ denote the sets of $p\times m$ matrices with elements in $\efe,$ $\efe[\la]$ and $\efe(\la),$  respectively. The elements of $\efe[\la]^{p\times m}$ are called polynomial matrices  or matrix polynomials. A unimodular matrix is a square polynomial matrix with polynomial inverse or, equivalently,  a square polynomial matrix with nonzero constant determinant. Moreover, the elements of $\efe(\la)^{p\times m}$ are called rational matrices.

Let $R(\la)\in\F(\la)^{p\times m}$ and let $\Omega$ be a nonempty subset of $\F$. $R(\la)$ is \textit{defined or bounded in $\Omega$} if $R(\la_0)\in\F^{p\times m}$ for all $\la_0\in\Omega$ (by assuming the entries of $R(\la)$ to be irreducible rational functions). Moreover, $R(\la)\in\F(\la)^{m\times m}$ is \textit{invertible in $\Omega$} if it is defined in $\Omega$ and $\det R(\la_0)\neq 0$ for all $\la_0\in\Omega.$ $R(\la)$ is said to be \textit{defined (invertible) at $\la_0$} if it is defined (invertible) in $\Omega:=\{\la_0\}.$ A rational matrix $R(\la)\in\F(\la)^{m\times m}$ is said to be \textit{regular} if it is invertible at some $\la_0\in\F,$ that is, $\det R(\la)\not\equiv 0.$ Let $\la_0\in\F.$ Two rational matrices $G(\la), H(\la)\in\F(\lambda)^{p\times m}$ are \textit{equivalent at} $\la_0$ if there exist rational matrices $R_1(\la)\in\F(\lambda)^{p\times p}$ and $R_2(\la)\in\F(\lambda)^{m\times m}$ both invertible at $\la_0$ such that $R_1(\la)G(\la)R_2(\la)=H(\la).$ If $G(\la)$ and $H(\la)$ are equivalent for all $\la_0$ in a nonempty set $\Omega$ then $G(\la)$ and $H(\la)$ are said to be \textit{equivalent in} $\Omega$ \cite[Proposition 2.4]{local}. Let $G(\la)\in\F(\la)^{p\times m}$ and $\lambda_{0}\in\efe.$ Then $G(\la)$ is equivalent at $\la_0$ to a matrix of the form (see \cite{ AmMaZa15, vandooren-laurent-1979})
\begin{equation}\label{localsm}
\left[\begin{array}{cc}
\diag\left((\la-\la_{0})^{\nu_{1}},\ldots, (\la-\la_{0})^{\nu_{r}}\right)&0 \\
0& 0_{(p-r)\times (m-r)}
\end{array}\right],
\end{equation}
where $r$ is the normal rank of $G(\la)$, which will be denoted by $\nrank G(\la)$, and $\nu_{1}\leq\cdots\leq\nu_{r}$ are integers. The integers $\nu_{1},\ldots,\nu_{r}$ are uniquely determined by $G(\la)$ and $\la_{0}$, and are called the \textit{invariant orders at} $\la_0$ of $G(\la)$. The matrix in \eqref{localsm} is called the \textit{local Smith--McMillan form of $G(\la)$ at} $\la_{0}.$ In order to define zeros and poles we need to distinguish between positive and negative invariant orders \cite{Kailath,Vard}. Let
$\nu_1\leq\cdots\leq \nu_k<0=\nu_{k+1}=\cdots=\nu_{u-1}<\nu_u\leq\cdots\leq \nu_r$ be the invariant orders at $\la_{0}$ of $G(\la)$. Then $\la_{0}$ is said to be a \textit{pole} of $G(\la)$ with \textit{partial multiplicities} $-\nu_k,\cdots, -\nu_1,$ and a \textit{zero} of $G(\la)$ with \textit{partial multiplicities} $\nu_u,\cdots, \nu_r.$ In particular, the integers $-\nu_k,\cdots, -\nu_1$ and $\nu_u,\cdots, \nu_r$ are called the pole and zero partial multiplicities of $G(\la)$ at $\la_0,$ respectively. Moreover, $(\la-\la_0)^{-\nu_i}$ for $i=1,\ldots,k$ are called  the \textit{pole elementary divisors of $G(\la)$ at} $\la_0$, while $(\la-\la_0)^{\nu_i}$ for $i=u,\ldots,r$ are called the \textit{zero elementary divisors of $G(\la)$ at} $\la_0.$ If $G(\la)$ is a polynomial matrix then the polynomials $(\la-\la_0)^{\nu_i}$ with $\nu_i\neq 0$ are simply called \textit{elementary divisors of $G(\la)$ at} $\la_0,$ and the nonzero integers $\nu_i\neq 0$ are all positive and are called \textit{partial multiplicities of $G(\la)$ at} $\la_0.$ The (pole/zero) elementary divisors of $G(\la)$ in a nonempty subset $\Omega$ of $\F$ are the (pole/zero) elementary divisors of $G(\la)$ at every $\la_0\in\Omega$.

A matrix polynomial of the form
\begin{equation}\label{eq:polsysmat}
P(\la)=\begin{bmatrix}
A(\la) & B(\la)\\
-C(\la) & D(\la)
\end{bmatrix}
\end{equation} with $A(\la)\in\FF[\la]^ {n\times n}$ regular,   $B(\la)\in\FF[\la]^{n\times m}$, $C(\la)\in\FF[\la]^{p\times n},$ and $D(\la)\in\FF[\la]^{p\times m},$ is called a \textit{polynomial system matrix}, and the rational matrix $$G(\la)=D(\la)+C(\la)A( \la)^{-1}B(\la)$$ is called the \textit{transfer function matrix} of $P(\la)$ \cite{Rosen}. The matrix $A(\la)$ is called the \textit{state matrix} of $P(\la).$ We allow $n$ to be 0, that is, we consider $P(\la)=D(\la)$ as a polynomial system matrix with  $A(\la), B(\la)$ and $C(\la)$ as empty matrices. In this extreme case we say that $P(\la)=D(\la)$ is a polynomial system matrix with empty state matrix. Its transfer function is $D(\la)$. Let $\Omega$ be a nonempty subset of $\F.$ The polynomial system matrix $P(\la)$ in \eqref{eq:polsysmat}, with $n>0,$ is said to be \textit{minimal in} $\Omega$ if
$$\rank\begin{bmatrix} A(\la_0) \\ C(\la_0)\end{bmatrix}=\rank\begin{bmatrix} A(\la_0) & B(\la_0) \end{bmatrix}=n,$$
for all $\la_0\in\Omega.$ In the particular case that $\Omega=\{\la_0\},$ $P(\la)$ is said to be \textit{minimal at} $\la_0$ if $P(\la)$ is minimal in $\Omega.$ If $n=0$ in \eqref{eq:polsysmat}, we adopt the agreement that $P(\la)$ is minimal at every point $\la_{0}\in\F.$ We can now state the definition of linearization of a rational matrix in a subset of $\F.$ Then, in Theorem \ref{theo:spectral}, a spectral characterization is given.
\begin{deff}\cite[Definition 4.1]{local}\label{def_pointstronglin}
	Let $G(\la) \in\F(\la)^{p\times m}$ and let $\Omega\subseteq\efe$ be nonempty. Let
	\begin{equation}\label{eq_lin}
	\mathcal{L}(\la)=\left[\begin{array}{cc}
	A_1 \la +A_0 &B_1 \la +B_0\\-(C_1 \la +C_0)&D_1 \la +D_0
	\end{array}\right]\in\F[\la]^{(n+q)\times (n+r)}
	\end{equation} be a linear polynomial system matrix, with state matrix $A_1 \la +A_0$, and let
	$$\wh{G}(\la)=(D_1\la+D_0)+(C_1\la+C_0)(
	A_1\la+A_0)^{-1}(B_1\la+B_0)\in\F(\la)^{q\times r}$$ be its transfer function matrix. $\mathcal{L}(\la)$ is a linearization of $G(\la)$ in $\Omega$ if the following conditions hold:
	\begin{itemize}
		\item [(a)] $\mathcal{L}(\la)$ is minimal in $\Omega$, and
		\item[(b)]there exist nonnegative integers $s_1,s_2$ satisfying $s_1-s_2=q-p=r-m,$ and rational matrices $R_1(\la)\in\F(\la)^{(p+s_1)\times (p+s_1)}$ and
		$R_2(\la)\in\F(\la)^{(m+s_1)\times (m+s_1)}$ invertible in $\Omega$ such that
		\begin{equation}\label{equivalencia}
		R_1(\la)\diag(G(\la),I_{s_1})R_2(\la)=\diag(\wh{G}(\la),I_{s_2}).
		\end{equation}
	\end{itemize}
	
\end{deff}

The most usual case is when $q-p=r-m\geq 0.$ This implies that the size of the transfer function matrix $\wh{G}(\la)$ of the linearization $\mathcal{L}(\la)$ is at least the size of the rational matrix $G(\la).$  We assume in the rest of the paper that this holds and take $s_2=0$ and $s:=s_1\geq0$ without loss of generality.

\begin{theo}\cite[Theorem 4.4]{local}[Spectral characterization of linearizations in a subset of $\F$]\label{theo:spectral}
	Let $G(\la)\in\FF(\la)^{p\times m}$ and $\Omega\subseteq\F$ be nonempty. Let
	\[
	\mathcal{L}(\la)=\begin{bmatrix}
	A_1 \la +A_0 &B_1 \la +B_0\\-(C_1 \la +C_0)&D_1 \la +D_0
	\end{bmatrix}\in\efe[\la]^{(n+(p+s))\times (n+(m+s))}
	\]
	be a linear polynomial system matrix, with state matrix $A_1 \la +A_0$, minimal in $\Omega$. Then $\mathcal{L}(\la)$ is a linearization of $G(\la)$ in $\Omega$ if and only if the following three conditions hold:
	\begin{enumerate}
		\item[\rm (a)] $\mbox{\rm nrank} \,   \mathcal{L}(\la)  =\mbox{\rm nrank}\,  G(\la)+n+s $,
		
		\item[\rm (b)]  the pole elementary divisors of $G(\la)$ in $\Omega$ are the elementary divisors of $A_{1}\la+A_{0}$ in $\Omega,$ and
		
		\item[\rm (c)]  the zero elementary divisors of $G(\la)$ in $\Omega$ are the elementary divisors of $\mathcal{L}(\la)$ in $\Omega.$
	\end{enumerate}
\end{theo}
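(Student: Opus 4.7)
The strategy is to reduce the characterization to two independent ingredients: the equivalence built into Definition \ref{def_pointstronglin}, and a local Rosenbrock-style correspondence linking a minimal polynomial system matrix to its transfer function. Write $\wh{G}(\la)$ for the transfer function of $\mathcal{L}(\la)$ and $A(\la):=A_1\la+A_0$. The plan is to use the first ingredient to move spectral information between $G(\la)$ and $\wh{G}(\la)$ at each point of $\Omega$, and the second to move that information between $\wh{G}(\la)$ and the pair $(A(\la),\mathcal{L}(\la))$.

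The first ingredient is straightforward: the equivalence in $\Omega$ in Definition \ref{def_pointstronglin}(b) implies that, at every $\la_0\in\Omega$, $\wh{G}(\la)$ and $\diag(G(\la),I_s)$ share the same local Smith--McMillan form \eqref{localsm}. Stripping the trivial $I_s$ block yields identical pole and zero partial multiplicities of $G(\la)$ and $\wh{G}(\la)$ at every $\la_0\in\Omega$, together with $\nrank \wh{G}(\la)=\nrank G(\la)+s$.

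The second ingredient is a local version of Rosenbrock's classical theorem: if $P(\la)$ is a polynomial system matrix with state matrix $A(\la)$ and transfer function $H(\la)$ that is minimal in $\Omega$, then $\nrank P(\la)=\nrank H(\la)+n$, the pole elementary divisors of $H(\la)$ in $\Omega$ equal the elementary divisors of $A(\la)$ in $\Omega$, and the zero elementary divisors of $H(\la)$ in $\Omega$ equal the elementary divisors of $P(\la)$ in $\Omega$. The natural starting point is the rational identity
\[
\begin{bmatrix} I_n & 0 \\ -C(\la)A(\la)^{-1} & I \end{bmatrix} P(\la) \begin{bmatrix} I_n & -A(\la)^{-1}B(\la) \\ 0 & I \end{bmatrix}=\begin{bmatrix} A(\la) & 0 \\ 0 & H(\la) \end{bmatrix},
\]
whose outer factors are not invertible at zeros of $\det A(\la)$. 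To handle an arbitrary $\la_0\in\Omega$ I would use the minimality rank conditions to apply constant invertible row/column transformations on $P(\la)$ that split off the part of $A(\la)$ that is nonsingular at $\la_0$; the decoupling identity then applies on the remaining block with factors invertible at $\la_0$, giving the pointwise equalities. Gluing the pointwise equivalences via \cite[Proposition 2.4]{local} produces the $\Omega$-global identifications. I expect this local Rosenbrock step to be the main technical obstacle, since tracking how minimality at $\la_0$ produces a decoupling transformation with the right invertibility properties requires care.

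With both ingredients in hand the two directions close quickly. For $(\Rightarrow)$, chain the two ingredients: the pole and zero elementary divisors of $G(\la)$ in $\Omega$ match those of $\wh{G}(\la)$, which in turn match the elementary divisors of $A(\la)$ and of $\mathcal{L}(\la)$ respectively; the normal rank equality in (a) follows analogously. For $(\Leftarrow)$, assuming (a)--(c) and minimality of $\mathcal{L}(\la)$ in $\Omega$, the second ingredient run in reverse shows that $\wh{G}(\la)$ and $\diag(G(\la),I_s)$ share their full local Smith--McMillan form at every $\la_0\in\Omega$, and \cite[Proposition 2.4]{local} then delivers the rational matrices $R_1(\la),R_2(\la)$ invertible in $\Omega$ required by Definition \ref{def_pointstronglin}(b), so $\mathcal{L}(\la)$ is a linearization of $G(\la)$ in $\Omega$.
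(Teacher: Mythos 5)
This theorem is not proved in the present paper: it is imported verbatim from \cite[Theorem 4.4]{local}. Your two-ingredient architecture does mirror how that proof is organized: (i) the equivalence in Definition \ref{def_pointstronglin}(b) preserves local Smith--McMillan forms, hence transfers the pole/zero structure and normal rank between $G(\la)$ and $\wh{G}(\la)$ throughout $\Omega$; (ii) a local Rosenbrock-type theorem for polynomial system matrices minimal at a point links $\wh{G}(\la)$ to the pair $(A(\la),\mathcal{L}(\la))$. Ingredient (i) and the way you close both implications from the two ingredients are fine.

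The genuine gap is in your treatment of ingredient (ii), which you yourself identify as the main obstacle but do not actually overcome. Your plan is to use constant transformations to split $A(\la)$ into a block that is invertible at $\la_0$ and a complementary block, and then to apply the decoupling identity ``with factors invertible at $\la_0$'' to what remains. This fails precisely in the only nontrivial case. If $\rank A(\la_0)=n-k$ with $k\geq 1$, then after splitting off the nonsingular part and taking the Schur complement you are left with a smaller system matrix whose $k\times k$ state matrix $A'(\la)$ satisfies $A'(\la_0)=0$; consequently $A'(\la)^{-1}$ has poles at $\la_0$ and the outer factors $-C'(\la)A'(\la)^{-1}$ and $-A'(\la)^{-1}B'(\la)$ of the decoupling identity are still not defined at $\la_0$, let alone invertible there. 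In other words, the decoupling identity can never be upgraded to an equivalence at a zero of $\det A(\la)$ by peeling off nonsingular directions of $A(\la_0)$ --- that is exactly why the local Rosenbrock correspondence is a theorem requiring the minimality hypothesis rather than a computation. The actual proof goes through local coprimeness: the rank conditions at $\la_0$ say that $[A(\la)\ \ B(\la)]$ and $\left[\begin{smallmatrix}A(\la)\\ -C(\la)\end{smallmatrix}\right]$ have no elementary divisors at $\la_0$, and one then compares $\mathcal{L}(\la)$ with a canonical system matrix built from the local Smith--McMillan form of $\wh{G}(\la)$ at $\la_0$ (or invokes the local structure results of \cite{AmMaZa15,vandooren-laurent-1979}). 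Without that argument, both directions of your proof rest on an unestablished lemma.
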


All the concepts and results introduced above are extended to infinity in \cite{local}. For that, the notion of $g$-reversal of a rational matrix \cite[Definition 3.7]{local} is used. That is, given a rational matrix $G(\lambda)\in\F(\lambda)^{p\times m}$ and an integer $g$, the \textit{$g$-reversal} of $G(\la)$ is defined as the rational matrix
\begin{equation*}
\rev_{g} G(\lambda):=\lambda^{g}G\left(\dfrac{1}{\lambda}\right).
\end{equation*}	
A nonconstant linear polynomial system matrix $\mathcal{L}(\la)$ as in (\ref{eq_lin}) is defined to be a \textit{linearization of a rational matrix $G(\la)$ at $\infty$ of grade $g$} if $\rev_1 \mathcal{L}(\la)$ is a linearization of $\rev_{g} G(\la)$ at $0.$  Linearizations at $\infty$  of $G(\la)$ provide information about the pole/zero structure of $G(\la)$ at $\infty$ as explained in \cite[Theorem 4.12 and Proposition 4.13]{local}.

Finally, we recall the notions of rational and minimal bases, which are very useful for the construction of linearizations for rational matrices. A rational matrix $R(\la)$ is said to be a \textit{rational basis}  if its rows form a basis of the rational subspace they span, i.e., if $R(\la)$ has full row normal rank. The term ``rational subspace'' indicates that $\F(\la)$ is the underlying field. Two rational matrices $G(\la) \in \F (\la)^{p\times m}$ and $H(\la) \in \F (\la)^{q\times m}$ are \textit{dual rational bases} if both have full row normal rank, $p+q = m$, and $G(\la) \, H(\la)^T =0$. Let $\Omega\subseteq\F$ be nonempty. We will say that a rational matrix $R(\la) \in \mathbb{F}(\la)^{p \times m}$ has \textit{full row rank in} $\Omega$ if, for all $\la_0\in \Omega$, $R(\la_0)\in \mathbb{F}^{p \times m}$, i.e., $R(\la)$ is defined or bounded at $\la_0$, and $\rank R(\la_0) = p$. Notice that a rational matrix having full row rank in $\F$ must be polynomial, since it has to be defined in $\F.$ For polynomial matrices, the notion of minimal basis is also considered \cite{forney}. One of the most useful characterizations of minimal bases (see \cite[Main Theorem]{forney} or \cite[Theorem 2.2]{BKL}) is that $K(\la)\in\F[\la]^{p\times m}$ is a minimal basis if and only if $K(\la_{0})$ has full row rank for all $\la_{0}\in\F$ and $K(\la)$ is row reduced, i.e., its highest row degree coefficient matrix has full row rank (see \cite[Definition 2.1]{BKL}).

Minimal bases appear in the definition of strong block minimal bases pencils in \cite{BKL}. A strong block minimal bases pencil is a linear polynomial matrix with the following structure
\begin{equation}
\label{eq:minbaspencil}
\begin{array}{cl}
L(\la) =
\left[
\begin{array}{cc}
M(\la) & K_2 (\la)^T \\
K_1 (\lambda) &0
\end{array}
\right]&
\begin{array}{l}
\left. \vphantom{K_2 (\la)^T} \right\} {\scriptstyle p + \widehat{p}}\\
\left. \vphantom{K_1 (\la)} \right\} {\scriptstyle \widehat{m}}
\end{array}\\
\hphantom{\mathcal{L}(\la) =}
\begin{array}{cc}
\underbrace{\hphantom{K_1 (\lambda)}}_{\scriptstyle m + \widehat{m}} & \underbrace{\hphantom{K_2 (\la)^T}}_{\widehat{p}}
\end{array}
\end{array}
\>,
\end{equation}
where $K_1(\la) \in \FF[\la]^{\widehat{m} \times (m + \widehat{m})}$ (respectively $K_2(\la) \in \FF[\la]^{\widehat{p} \times (p + \widehat{p})}$) is a minimal basis with all its row degrees equal to $1$ and with the row degrees of a minimal basis $N_1(\la) \in \FF[\la]^{m \times (m + \widehat{m})}$ (respectively $N_2(\la) \in \FF[\la]^{p \times (p + \widehat{p})}$) dual to $K_1(\la)$ (respectively $K_2(\la)$) all equal. In addition, given a polynomial matrix $D(\la)$, $L(\la)$ is said to be associated to $D(\la)$ if
\begin{equation} \label{eq:Dpolinminbaslin}
D(\la) = N_2(\la) M(\la) N_1(\la)^T.
\end{equation}	
In this case, $L(\la)$ is a strong linearization of $D(\la)$ \cite[Theorem 3.3]{BKL}.

\section{Block full rank linearizations of rational matrices} \label{sec-full-rank-lin}

In this section, we introduce in Theorems \ref{th:1blockfullrank} and \ref{th:1blockfullrankinfinity} a wide family of pencils that give us the information about the zeros and poles of rational matrices locally, at finite points and also at infinity. They will be called block full rank linearizations. Previously, we introduce the notion of block full rank pencils \cite{local}, that are an extension of the strong block minimal bases pencils in \cite{BKL}.

\begin{deff}\cite[Definition 5.2]{local} \label{def:blockfullrank} A block full rank pencil is a linear polynomial matrix over $\mathbb{F}$ with the following structure
	\begin{equation}\label{almost}
	L(\la) =
	\left[
	\begin{array}{cc}
	M(\la) & K_2 (\la)^T \\
	K_1 (\lambda) &0
	\end{array}
	\right]
	\end{equation}
	where $K_1(\la)$ and $K_2(\la)$ are pencils with full row normal rank.
\end{deff}
Definition \ref{def:blockfullrank} includes the cases when $K_1 (\la)$ or $K_2 (\la)$ are empty matrices, that is, $L(\la)=	\left[
\begin{array}{cc}
M(\la) & K_2 (\la)^T
\end{array}
\right]$, $L(\la)=	\left[
\begin{array}{c}
M(\la) \\
K_1 (\lambda)
\end{array}
\right]$ or $L(\la)=M(\la)$.

Theorem 5.3 of \cite{local} states that block full rank pencils can be considered as local linearizations of rational matrices that contain information about their zeros. However, they do not contain information about poles.

\subsection{Block full rank linearizations at finite points}\label{sec:finitepoints}

In Theorem \ref{th:1blockfullrank}, we generalize \cite[Theorem 5.3]{local} in order to obtain local linearizations that give us not only information about the zeros but also about the poles of the corresponding rational matrix. Note also that the linear polynomial system matrix in \eqref{eq:linearization} generalizes the structure of the strong block minimal bases linearizations of rational matrices presented in \cite[Theorem 5.11]{strong} from three perspectives: general pencils $B(\la)$ and $C(\la)$ are allowed, while those in \cite{strong} have a very particular structure; $A(\la)$ can be any regular pencil, while in \cite{strong} its coefficient in $\la$ must be invertible; and $\left[
\begin{array}{cc}
M(\la) & K_2(\la)^T \\
K_1 (\la) & 0
\end{array}
\right]$ is an arbitrary block full rank pencil \eqref{almost}, while in \cite{strong} strong block minimal bases pencils \eqref{eq:minbaspencil} are considered. We give more details in Subsection \ref{SB_asBF}.

\begin{theo} \label{th:1blockfullrank} Consider a nonconstant linear polynomial system matrix \begin{equation} \label{eq:linearization}
	\mathcal{L}(\la) = \left[
	\begin{array}{c|cc}
	A(\la) & \phantom{a} B(\la) \phantom{a} & 0 \\ \hline \phantom{\Big|}
	-C(\la)  \phantom{\Big|}& M(\la) & K_2(\la)^T \\
	0 & K_1 (\la) & 0
	\end{array}
	\right]\in\efe [\la]^{(n+q)\times (n+r)}
	\end{equation}
	with $n>0$ and state matrix $A(\la)$. Let $L(\la):= \left[
	\begin{array}{cc}
	
	M(\la) & K_2(\la)^T \\
	K_1 (\la) & 0
	\end{array}
	\right]$ be a block full rank pencil, and let $N_1 (\la)$ and $N_2 (\la)$ be any rational bases dual to $K_1 (\la)$ and $K_2 (\la)$, respectively. Let $\Omega$ be a nonempty subset of $\mathbb{F}$ such that $K_i (\la)$ and $N_i (\la)$ have full row rank in $\Omega$ for $i=1,2.$ If
	\begin{equation}\label{minimalitycondition}
	\rank\begin{bmatrix} A(\la_0) \\ -N_2(\la_0)C(\la_0)\end{bmatrix}=\rank\begin{bmatrix} A(\la_0) & B(\la_0)N_1(\la_0)^{T} \end{bmatrix}=n
	\end{equation}
	for all $\la_0\in\Omega$ then
	$\mathcal{L}(\la)$ is a linearization of the rational matrix
	\begin{equation}\label{eq:rat_matrix}
	R(\la)=N_{2}(\la) [ M(\la)+C(\la)A(\la)^{-1}B(\la) ] N_{1}(\la)^{T}
	\end{equation} in $\Omega$ with state matrix $A(\la)$.
\end{theo}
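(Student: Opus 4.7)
The plan is to verify directly the two conditions of Definition~\ref{def_pointstronglin}; Theorem~\ref{theo:spectral} would give an alternative route but would still demand the same minimality verification as a first step.

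For the minimality of $\mathcal L(\la)$ in $\Omega$, the condition is that, for every $\la_0\in\Omega$,
$\mathrm{rank}\bigl[\begin{smallmatrix} A(\la_0) \\ -C(\la_0) \\ 0\end{smallmatrix}\bigr]=n=\mathrm{rank}\bigl[\begin{smallmatrix} A(\la_0) & B(\la_0) & 0\end{smallmatrix}\bigr]$. This follows from hypothesis~\eqref{minimalitycondition} via the identity
$$
\begin{bmatrix}I_n & 0 \\ 0 & N_2(\la_0)\end{bmatrix}\begin{bmatrix}A(\la_0) \\ -C(\la_0)\end{bmatrix}=\begin{bmatrix}A(\la_0) \\ -N_2(\la_0) C(\la_0)\end{bmatrix}:
$$
the right-hand side has rank $n$, so the middle factor has rank at least $n$, and at most $n$ because it has only $n$ columns. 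The zero row block in $\mathcal L$ does not change ranks, and the symmetric argument using $N_1(\la_0)^T$ on the right handles the other condition.

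For the equivalence condition, I would first compute the transfer function of $\mathcal L(\la)$, obtaining
$$
\wh G(\la)=\begin{bmatrix}G_M(\la) & K_2(\la)^T \\ K_1(\la) & 0\end{bmatrix},\qquad G_M(\la):=M(\la)+C(\la)A(\la)^{-1}B(\la),
$$
and then reduce $\wh G(\la)$ to $\mathrm{diag}(R(\la),I_s)$ by local rational-matrix operations, in the spirit of the proof of \cite[Theorem 5.3]{local} but now with the rational block $G_M(\la)$ in place of a polynomial. Premultiplying $\wh G(\la)$ by $\mathrm{diag}\bigl(\bigl[\begin{smallmatrix} N_2(\la)\\ K_2(\la)\end{smallmatrix}\bigr],I_{\widehat m}\bigr)$ and postmultiplying by $\mathrm{diag}\bigl([\,N_1(\la)^T \;\; K_1(\la)^T\,],I_{\widehat p}\bigr)$, both of which are rational matrices invertible in $\Omega$ (the square stacks $\bigl[\begin{smallmatrix}N_i\\ K_i\end{smallmatrix}\bigr]$ are nonsingular at every $\la_0\in\Omega$ because $K_i,N_i$ have full row rank there and $K_i N_i^T=0$), and using the duality identities $N_2 K_2^T=0$ and $K_1 N_1^T=0$, yields a $3\times 3$ block matrix whose $(1,1)$ block is $R(\la)=N_2(\la) G_M(\la) N_1(\la)^T$ and whose $(2,3)$ and $(3,2)$ blocks are $K_2K_2^T$ and $K_1K_1^T$, each invertible in $\Omega$. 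A short sequence of elementary block row and column operations using these Gram matrices as pivots clears the remaining off-diagonal blocks, and a block permutation followed by scaling turns the matrix into $\mathrm{diag}(R(\la),I_{\widehat p+\widehat m})$. The composition of all these transformations produces the invertible-in-$\Omega$ factors $R_1(\la),R_2(\la)$ of Definition~\ref{def_pointstronglin} with $s=\widehat p+\widehat m$, matching the size constraint $q-p=r-m=s$ imposed by the block structure of $\mathcal L(\la)$.

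The main obstacle is bookkeeping: every transformation in the reduction must be a rational matrix defined and invertible at every $\la_0\in\Omega$, which amounts to verifying invertibility of the two change-of-basis matrices $\bigl[\begin{smallmatrix}N_i\\ K_i\end{smallmatrix}\bigr]$ and of the Gram pivots $K_iK_i^T$. Each of these invertibility statements follows from the full row rank of $K_i(\la)$ and $N_i(\la)$ in $\Omega$. Once this is done, the minimality from Step~1 and the equivalence from Step~2 together deliver the two conditions required by Definition~\ref{def_pointstronglin}, establishing that $\mathcal L(\la)$ is a linearization of $R(\la)$ in $\Omega$ with state matrix $A(\la)$.
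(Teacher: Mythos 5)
Your first step is sound: deriving the minimality of $\mathcal{L}(\la)$ in $\Omega$ from \eqref{minimalitycondition} via $\diag(I_n,N_2(\la_0))$ and $\diag(I_n,N_1(\la_0)^T)$ is correct, and passing to the transfer function $\wh G(\la)=\left[\begin{smallmatrix} G_M(\la) & K_2(\la)^T\\ K_1(\la) & 0\end{smallmatrix}\right]$ is a legitimate way to attack condition (b) of Definition \ref{def_pointstronglin}. The gap is in the reduction of $\wh G(\la)$ to $\diag(R(\la),I_s)$: both invertibility claims on which it rests are false over an algebraically closed field, because the bilinear form $u\mapsto uv^T$ (no conjugation) is isotropic. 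Full row rank of $K_i(\la_0)$ and $N_i(\la_0)$ together with $K_i N_i^T=0$ does \emph{not} make the square stack $\left[\begin{smallmatrix}N_i(\la_0)\\ K_i(\la_0)\end{smallmatrix}\right]$ nonsingular: take $K_1(\la)=[1\;\;\la]$ and $N_1(\la)=[\la\;\;-1]$, which are dual minimal bases of full row rank at every point of $\F$, yet $\det\left[\begin{smallmatrix}\la & -1\\ 1 & \la\end{smallmatrix}\right]=\la^2+1$ vanishes at $\la_0=\pm i$. For the same reason the ``Gram pivots'' need not be usable: in this example $K_1K_1^T=1+\la^2$ also vanishes at $\pm i$, and for $K=[1\;\;i]$ one even has $KK^T\equiv 0$ despite full row rank. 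So the transformations you compose are not invertible in $\Omega$ in general, and the equivalence \eqref{equivalencia} is not established.

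The paper's proof is built precisely to avoid forming $[N_i;K_i]$ or $K_iK_i^T$. It first replaces $K_i$ and $N_i$ by minimal bases $\widetilde K_i,\widetilde N_i$ of their row spaces (via \cite[Lemma 5.2]{local}, with correction factors $S_i,W_i$ invertible in $\Omega$), and then invokes the unimodular completions of \cite[Theorem 2.10]{BKL}: $U_i=\left[\begin{smallmatrix}\widetilde K_i\\ \widehat K_i\end{smallmatrix}\right]$ with $U_i^{-1}=[\widehat N_i^T\;\;\widetilde N_i^T]$, so each $\widetilde K_i$ is paired with a genuine unimodular completion $\widehat K_i$ rather than with $\widetilde N_i$. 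The block transformations $V_1,V_2$ assembled from these completions reduce $\widetilde{\mathcal L}(\la)$ unimodularly to $\diag(H(\la),I)$, where $H(\la)$ is a polynomial system matrix, minimal in $\Omega$ by \eqref{minimalitycondition}, whose transfer function is equivalent in $\Omega$ to $R(\la)$; the conclusion then follows from the spectral characterization of Theorem \ref{theo:spectral} rather than from a direct verification of \eqref{equivalencia}. To repair your argument while keeping the transfer-function route, you would need to substitute your change-of-basis matrices and Gram pivots by such unimodular completions (or otherwise prove invertibility in $\Omega$ of whatever transformations you use), which is exactly the content of the machinery you skipped.
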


A pencil of the form \eqref{eq:linearization} satisfying the hypotheses in Theorem \ref{th:1blockfullrank} is called a \textit{block full rank linearization}. In particular, $\mathcal{L}(\la)$ is said to be a block full rank linearization of $R(\la)$ in $\Omega$ with state matrix $A(\la).$

\begin{rem}\label{rem_emptycasefinite}\rm 	The extreme case of $n=0$ in the linear polynomial system matrix \eqref{eq:linearization} is studied in \cite[Theorem 5.3]{local}. It states that the block full rank pencil $L(\la)$ in Theorem \ref{th:1blockfullrank} is a linearization of the rational matrix $$G(\la) = N_{2}(\la) M(\la) N_{1}(\la)^{T}$$ in $ \Omega$ with empty state matrix. In this case, $L(\la)$ is said to be a block full rank linearization of $G(\la)$ in $\Omega$ with empty state matrix.
\end{rem}

\begin{proof}[Proof of Theorem \ref{th:1blockfullrank}] In order to simplify the notation, throughout this proof we do not specify the sizes of different identity matrices and all of them are denoted by $I$. Let $\widetilde{K}_1 (\la), \widetilde{K}_2 (\la), \widetilde{N}_1 (\la)$ and $\widetilde{N}_2 (\la)$ be minimal bases of the row spaces of $K_1 (\la)$, $K_2 (\la)$, $N_1 (\la)$ and $N_2 (\la)$, respectively. Then, by \cite[Lemma 5.2]{local}, there exist rational matrices $S_1 (\la)$, $S_2 (\la)$, $W_1 (\la)$ and $W_2 (\la)$ such that
	\begin{eqnarray*}
		& K_i (\la) = S_i (\la) \widetilde{K}_i (\la), &  \mbox{and $S_i (\la)$ is invertible in $\Omega$, for $i=1,2,$} \\
		& N_i (\la) = W_i (\la) \widetilde{N}_i (\la), &  \mbox{and $W_i (\la)$ is invertible in $\Omega$, for $i=1,2.$}
	\end{eqnarray*}
	Moreover, $\widetilde{K}_1 (\la), \widetilde{K}_2 (\la), S_1(\la)$ and $S_2(\la)$ are all matrix pencils. We consider the linear polynomial system matrix \begin{equation} \label{eq:linearization2}
	\widetilde{\mathcal{L}}(\la) := \left[
	\begin{array}{c|cc}
	A(\la) & \phantom{a} B(\la) \phantom{a} & 0 \\ \hline \phantom{\Big|}
	-C(\la)  \phantom{\Big|}& M(\la) & \widetilde K_2(\la)^T \\
	0 & \widetilde K_1 (\la) & 0
	\end{array}
	\right],
	\end{equation} which is equivalent in $\Omega$ to $\mathcal{L}(\la),$ since
	$ \left[
	\begin{array}{cc}
	
	I & 0 \\
	0 & S_1(\la)
	\end{array}
	\right]\widetilde{\mathcal{L}}(\la) \left[
	\begin{array}{cc}
	
	I & 0 \\
	0 & S_2(\la)^T
	\end{array}
	\right]=\mathcal{L}(\la).$
	For $i=1,2,$ there exist unimodular matrices
	\[
	U_i(\lambda) =
	\begin{bmatrix}
	\widetilde K_i(\lambda) \\ \widehat{K}_i(\la)
	\end{bmatrix}
	\mbox{, and}\quad
	U_i(\lambda)^{-1}=
	\begin{bmatrix}
	\widehat{N}_i(\lambda)^T & \widetilde N_i(\lambda)^T
	\end{bmatrix}
	\] as in \cite[Theorem 2.10]{BKL}. Consider now the unimodular matrices
	\begin{align*}
	V_{1}(\la) & := \begin{bmatrix}
	I & 0 & 0 & 0 \\
	0 & \widehat{N}_1(\lambda)^T & \widetilde N_1(\lambda)^T & 0 \nonumber \\
	0 & 0 & 0 & I
	\end{bmatrix}  \left[\begin{array}{cccc}
	I & 0 & 0 & 0 \\
	0 & 0 & I & 0 \\
	0 & I & 0 & 0  \\
	0 & -X(\la) & 0 & I
	\end{array}\right], \text{ and} & \\
	V_{2}(\la) & := \left[\begin{array}{cccc}
	I & 0 & 0 & 0 \\
	0 & 0 & I & -Y(\la) \\
	0 & 0 & 0 & I \\
	0 & I & 0 & -Z(\la)
	\end{array}\right]	\begin{bmatrix}
	I & 0 & 0 \\
	0 & \widehat{N}_2(\lambda) & 0 \\ 0 & \widetilde N_2(\lambda) & 0 \\0 & 0 & I
	\end{bmatrix},
	\end{align*} where $Z(\la):=\widehat{N}_2(\lambda)M(\la)\widehat{N}_1(\lambda)^T,$ $X(\la):=\widehat{N}_2(\lambda)M(\la)\widetilde N_1(\lambda)^T,$ $Y(\la):=\widetilde N_2(\lambda)M(\la)\widehat{N}_1(\lambda)^T.$
	We obtain that {\small$$V_{2}(\la)\widetilde{\mathcal{L}}(\la) V_{1}(\la)=  \left[
		\begin{array}{c|ccc}
		A(\la) & \phantom{a} B(\la)\widetilde{N}_1(\lambda)^T \phantom{a} & B(\la)\widehat N_1(\lambda)^T & 0 \\ \hline \phantom{\Big|}
		- \widetilde{N}_2(\lambda)C(\la)  \phantom{\Big|}& \widetilde N_2(\lambda)M(\la)\widetilde{N}_1(\lambda)^T & 0 & 0\\
		0 \phantom{\Big|}& 0 & I & 0 \\
		- \widehat{N}_2(\lambda)C(\la) \phantom{\Big|}&0 & 0 &I
		\end{array}
		\right], $$ }
	which is, in addition, unimodularly equivalent to
	$$ \left[
	\begin{array}{c|ccc}
	A(\la) & \phantom{a} B(\la)\widetilde{N}_1(\lambda)^T \phantom{a} & 0  \\ \hline \phantom{\Big|}
	- \widetilde{N}_2(\lambda)C(\la)  \phantom{\Big|}& \widetilde N_2(\lambda)M(\la)\widetilde{N}_1(\lambda)^T & 0 \\
	0 \phantom{\Big|}& 0 & I  \\
	
	\end{array}
	\right]:=P(\la). $$
	By condition \eqref{minimalitycondition}, the polynomial matrix $$ H(\la):=\left[
	\begin{array}{c|ccc}
	A(\la) & \phantom{a} B(\la)\widetilde{N}_1(\lambda)^T \phantom{a} \\ \hline \phantom{\Big|}
	- \widetilde{N}_2(\lambda)C(\la)  \phantom{\Big|}& \widetilde N_2(\lambda)M(\la)\widetilde{N}_1(\lambda)^T
	
	\end{array}
	\right]$$
	is minimal in $\Omega,$ and its transfer function matrix is
	$$\widetilde R(\la)=\widetilde N_{2}(\la) [ M(\la)+C(\la)A(\la)^{-1}B(\la) ] \widetilde N_{1}(\la)^{T}.$$ Moreover, $ W_2(\la) \widetilde R(\la) W_1(\la)^T = R(\la),$ and, thus, $\widetilde R(\la)$ and $R(\la)$ are equivalent in $\Omega.$ Therefore, the zero elementary divisors of $H(\la)$ in $\Omega$ are the zero elementary divisors of $R(\la)$ in $\Omega,$ and the zero elementary divisors of $A(\la)$ in $\Omega$ are the pole elementary divisors of $R(\la)$ in $\Omega.$ In addition, $P(\la)=\left[
	\begin{array}{cc}
	
	H(\la) & 0 \\
	0 & I
	\end{array}
	\right]$ is unimodularly equivalent to $\widetilde{\mathcal{L}}(\la),$ which is equivalent in $\Omega$ to $\mathcal{L}(\la).$ Therefore, the zero elementary divisors of $\mathcal{L}(\la)$ in $\Omega$ are the zero elementary divisors of $R(\la)$ in $\Omega.$ By Theorem \ref{theo:spectral}, $\mathcal{L}(\la)$ is a linearization in $\Omega$ of $R(\la)$,  since it is immediate to check that the rank condition in Theorem \ref{theo:spectral}$\rm (a)$ is satisfied.
\end{proof}

\begin{rem}\rm Notice that $\mathcal{L}(\la)$ being minimal in $\Omega$ is a necessary condition, but not sufficient, in order property \eqref{minimalitycondition} to be satisfied.
\end{rem}

\begin{rem}\rm \label{rem:empty}
	If in Theorem \ref{th:1blockfullrank}, $K_1(\la)$ (resp. $K_2(\la) $)
	is an empty matrix, we can take the dual rational basis $N_1(\la) $ (resp. $N_2(\la)$) as any rational matrix invertible in $\Omega$ of size the number of colums (resp. rows) of $M(\la).$
\end{rem}

Under the conditions of Theorem \ref{th:1blockfullrank}, Theorem \ref{theo:spectral} guarantees that the elementary divisors of $\mathcal{L}(\la)$ in $\Omega$ are the zero elementary divisors of $R(\la)$ in $\Omega,$ and that the elementary divisors of $A(\la)$ in $\Omega$ are the pole elementary divisors of $R(\la)$ in $\Omega$. 

\begin{example}\rm Let us see a simple example that illustrates Remark \ref{rem:empty}. For instance, for constructing a linearization of  the rational matrix $$R(\la)=\frac{\la-2}{\la+2}\begin{bmatrix}
	\dfrac{-\la +3}{\la^2-1} & \dfrac{1}{\la(\la-1)}
	\end{bmatrix} \text{ in the set } \Omega:=\F-\{-1,0,1\},$$
	we can consider $K_{1}(\la)$ and $K_{2}(\la)$ as empty matrices and dual rational bases $N_{1}(\la)^{T}:=\diag\left(
	\frac{1}{\la^{2}-1} ,	\frac{1}{(\la-1)\la}\right)$ and $N_{2}(\la) := 1$, both invertible in $\Omega$. Then, by Theorem \ref{th:1blockfullrank}, the following linear polynomial system matrix
	
	$$\mathcal{L}(\la):= \left[\begin{array}{c|cc}
	\la+2 & -\la +3 & 1 \\ \hline
	-\la + 2 & 0 & 0
	\end{array}\right]:= \left[\begin{array}{c|c}
	A(\la) & B(\la) \\ \hline
	-C(\la) & M(\la)
	\end{array}\right]$$ is a linearization of $R(\la)$ in $\Omega$ with state matrix $\la+2$, since
	$$\rank\begin{bmatrix}
	\la+2\\
	-\la+2
	\end{bmatrix}=\rank\begin{bmatrix}
	\la + 2 & \dfrac{-\la +3} {\la^2-1} & \dfrac{1}{\la(\la-1)}
	\end{bmatrix}=1$$
	for all $\la\in\Omega$. Therefore, we can recover from $\mathcal{L}(\la)$ the pole and zero structure of $R(\la)$ in $\Omega$. More precisely, $-2$ is the only zero of the state matrix in $\Omega$ and, thus, is the only pole of $R(\la)$ in $\Omega$. Moreover, $2$ is the unique zero of $\mathcal{L}(\la)$ in $\Omega$ and, thus, is the unique zero of $R(\la)$ in $\Omega$.
\end{example}

In Section \ref{automaticlin} we will see that the linearizations for rational approximations of nonlinear eigenvalue problems given in \cite{automatic} are block full rank linearizations.

\begin{rem}\rm We notice that, although the state matrix $A(\la)$ appears in the (1,1) block in \eqref{eq:linearization}, in practice, it can be any regular submatrix of $\mathcal{L}(\la)$. In particular, in  some applications \cite{nlep, automatic} we have found pencils with the structure of block full rank linearizations of the form
	\begin{equation} \label{eq:linearization22}
	\mathcal{L}(\la) = \left[
	\begin{array}{cc|c}
	M(\la) & K_2(\la)^T &  - C(\la) \\
	K_1 (\la) & 0 & 0 \\ \hline \phantom{\Big|}
	B(\la)  & 0  & 	A(\la)
	\end{array}
	\right]\in\efe [\la]^{(q+n)\times (r+n)}.
	\end{equation}
\end{rem}

\subsection{Block full rank linearizations at infinity}\label{sec:infinity}

We now study the counterpart of Theorem \ref{th:1blockfullrank} at infinity. First, we define the notion of degree of a rational matrix. For the scalar case, we define the degree of a rational function $r(\la)=\dfrac{n(\la)}{d(\la)}$ as
\begin{equation}
\deg(r(\la)):= \deg(n(\la))-\deg(d(\la)).
\end{equation}
Then, for rational matrices we consider the following definition (see, for instance, \cite[p.10]{Vard}).
\begin{deff} Let $R(\la)=[r_{ij}(\la)]\in\F(\la)^{p\times m}$ be a rational matrix with entries $r_{ij}(\la)=\dfrac{n_{ij}(\la)}{d_{ij}(\la)}$. The degree of $R(\la)$ is then defined as
	$$\deg(R(\la)):= \max_{\substack{i=1,\ldots,p\\j=1,\ldots,m}} \{\deg(r_{ij}(\la))\}.$$
\end{deff}
Notice that this notion of degree of a rational matrix generalizes the notion of degree of a polynomial matrix. In what follows, we call the degrees of each row of $R(\la)$, the row degrees of $R(\la)$.

\begin{lem}\label{lemma_t} Let $R(\la)$ be a rational matrix. Then there exists an integer $t$ such that all the rows of $\rev_t R(\la)$ are defined at $0$ and are  all different from zero at $0$ if and only if all the row degrees of $R(\la)$ are equal to $t$.
\end{lem}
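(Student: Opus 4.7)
The plan is to reduce the statement to an entrywise analysis of the action of the reversal operator. Write each nonzero entry in irreducible form $r_{ij}(\la)=n_{ij}(\la)/d_{ij}(\la)$ with $\deg(r_{ij}(\la))=\deg n_{ij}-\deg d_{ij}$, and set $a_{ij}=\deg n_{ij}$, $b_{ij}=\deg d_{ij}$. First I would observe that
\[
\rev_t r_{ij}(\la)=\la^{t}\,\frac{n_{ij}(1/\la)}{d_{ij}(1/\la)}=\la^{t-(a_{ij}-b_{ij})}\,\frac{\la^{a_{ij}}n_{ij}(1/\la)}{\la^{b_{ij}}d_{ij}(1/\la)}=\la^{t-\deg(r_{ij}(\la))}\,\widetilde r_{ij}(\la),
\]
where $\widetilde r_{ij}(\la)$ is a rational function whose numerator and denominator are polynomials with nonzero constant term (since the highest-degree coefficients of $n_{ij}$ and $d_{ij}$ become the constant terms after the reversal). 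Hence $\widetilde r_{ij}(\la)$ is defined and nonzero at $0$.

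From this formula I can read off two equivalences directly: $\rev_t r_{ij}(\la)$ is defined at $0$ if and only if $t-\deg(r_{ij}(\la))\geq 0$, and it is moreover nonzero at $0$ if and only if $t-\deg(r_{ij}(\la))=0$. Zero entries are handled separately and pose no issue, since $\rev_t$ of the zero function is zero, which is defined at $0$ and never contributes a nonzero value.

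Applying this entrywise to a fixed row $i$ of $R(\la)$ with row degree $d_i:=\max_{j}\deg(r_{ij}(\la))$, the row of $\rev_t R(\la)$ is defined at $0$ if and only if $\deg(r_{ij}(\la))\leq t$ for every $j$, that is, $d_i\leq t$. Under that condition, the row evaluated at $0$ has the $j$-th entry equal to $\widetilde r_{ij}(0)\neq 0$ whenever $\deg(r_{ij}(\la))=t$, and equal to $0$ otherwise. So the row is nonzero at $0$ if and only if some $j$ realizes $\deg(r_{ij}(\la))=t$, which combined with $d_i\le t$ is equivalent to $d_i=t$.

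Putting the two conditions together over all rows, all rows of $\rev_t R(\la)$ are defined and nonzero at $0$ if and only if $d_i=t$ for every row $i$, which is precisely the statement that all row degrees of $R(\la)$ equal $t$. The only subtle point, and the step I would handle most carefully, is ensuring that the cancellations I implicitly use when passing from $n_{ij}(1/\la),d_{ij}(1/\la)$ to $\widetilde r_{ij}(\la)$ really do land in irreducible form; this is guaranteed by starting from irreducible $r_{ij}(\la)$ and noting that the substitution $\la\mapsto 1/\la$ preserves irreducibility up to the factors $\la^{a_{ij}},\la^{b_{ij}}$ that are tracked explicitly.
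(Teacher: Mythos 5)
Your proof is correct and takes essentially the same route as the paper's: both rest on the identity $\rev_t r(\la)=\la^{\,t-\deg(r(\la))}h(\la)$ with $h(\la)$ defined and nonzero at $0$, applied entrywise and then combined row by row through the maximum of the entry degrees. Your write-up is merely more explicit about the converse direction and about zero entries, which the paper treats as trivial.
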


\begin{proof} First, we consider a rational function $r(\la)=\dfrac{a(\la)}{b(\la)}$
	such that the numerator $a(\la)$ has degree $n$, and that the denominator $b(\la)$ has degree $m$. We assume that there exists an integer $t$ for which $\rev_t r(0)$ is defined and is different from $0$. We can write
	\begin{equation}\label{t_reversal}
	\rev_t r(\la)=\la^{t+m-n}\dfrac{\rev_n a(\la)}{\rev_m b(\la)}:=\la^{t+m-n}h(\la),
	\end{equation}	
	where $0$ is not a pole nor a zero of $h(\la)$ since $\rev_n a(0)\neq 0$ and $\rev_m b(0)\neq 0$. That is, $h(\la)$ is defined and is different from $0$ at $0$. Therefore, $t+m-n=0$, so that $\rev_t r(\la)$ is also defined and is different from $0$ at $0$. Then, $t=n-m=\deg(r(\la))$. Now, we assume that we have a rational row vector $v(\la)=[r_{1}(\la)\quad \cdots \quad r_{s}(\la)]$ such that, for some integer $t$, $\rev_t v(0)$ is defined and is different from $0$. Then, it must be $t=\displaystyle\max_{i=1,\ldots,s} \{\deg(r_{i}(\la))\}$. That is, $t=\deg(v(\la))$. Finally, consider a rational matrix $R(\la)=[v_{1}(\la)^{T}\quad \cdots \quad v_{q}(\la)^{T}]^{T}$ where $v_{i}(\la)$ are rational row vectors. Assume that there exists an integer $t$ such that all the rows of $\rev_t R(\la)$ are defined at $0$ and are different from zero at $0$. Then, for each row $v_{i}(\la)$, it must hold that $t=\deg(v_{i}(\la))$.
	
	The converse is trivial taking into account equation \eqref{t_reversal} for each entry of $R(\la)$.
\end{proof}

\begin{theo} \label{th:1blockfullrankinfinity} Consider a nonconstant linear polynomial system matrix \begin{equation} \label{eq:linearizationinf}
	\mathcal{L}(\la) = \left[
	\begin{array}{c|cc}
	A(\la) & \phantom{a} B(\la) \phantom{a} & 0 \\ \hline \phantom{\Big|}
	-C(\la)  \phantom{\Big|}& M(\la) & K_2(\la)^T \\
	0 & K_1 (\la) & 0
	\end{array}
	\right]\in\efe [\la]^{(n+q)\times (n+r)}
	\end{equation} with $n>0$ and state matrix $A(\la)$. Let $L(\la):= \left[
	\begin{array}{cc}
	
	M(\la) & K_2(\la)^T \\
	K_1 (\la) & 0
	\end{array}
	\right]$ be a block full rank pencil and let $N_1 (\la)$ (resp. $N_2 (\la)$) be any rational basis dual to $K_1 (\la)$ (resp. $K_2 (\la)$) with its row degrees all equal to an integer $t_1$ (resp. $t_2$). If $\rev_1 K_i (\la)$ and $\rev_{t_i} N_i (\la)$ have full row rank at zero for $i=1,2$ and
	\begin{equation}\label{minimalitycondition_inf}
	\rank\begin{bmatrix} \rev_1 A(0) \\ -\rev_{t_2} N_2(0)\rev_1 C(0)\end{bmatrix}=\rank\begin{bmatrix} \rev_1 A(0) & \rev_1 B(0) \rev_{t_1}N_1(0)^{T} \end{bmatrix}=n
	\end{equation}
	then
	$\mathcal{L}(\la)$ is a linearization  of the rational matrix
	\begin{equation*}
	R(\la)=N_{2}(\la) [ M(\la)+C(\la)A(\la)^{-1}B(\la) ] N_{1}(\la)^{T}
	\end{equation*}
	at $\infty$ of grade $1 + t_1 + t_2$ with state matrix $A(\la)$.
	
\end{theo}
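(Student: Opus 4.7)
The plan is to reduce the statement to the already proved finite version, Theorem~\ref{th:1blockfullrank}, by invoking the definition of a linearization at infinity of grade $g$: it suffices to show that $\rev_1 \mathcal{L}(\la)$ is a linearization at $0$ of $\rev_{1+t_1+t_2} R(\la)$. Thus I would apply Theorem~\ref{th:1blockfullrank} to the pencil $\rev_1 \mathcal{L}(\la)$ with $\Omega = \{0\}$ and then compute the resulting transfer function to identify it with $\rev_{1+t_1+t_2} R(\la)$.

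First, I would observe that $\rev_1 \mathcal{L}(\la)$ inherits the structure \eqref{eq:linearization}, with state matrix $\rev_1 A(\la)$ (still regular, since $\det \rev_1 A(\la) = \la^n \det A(1/\la) \not\equiv 0$), and with block full rank sub-pencil $\left[\begin{smallmatrix} \rev_1 M(\la) & \rev_1 K_2(\la)^T \\ \rev_1 K_1(\la) & 0 \end{smallmatrix}\right]$. Next I would propose $\rev_{t_i} N_i(\la)$ as a rational basis dual to $\rev_1 K_i(\la)$ for $i=1,2$: the relation $\rev_1 K_i(\la)\,\rev_{t_i} N_i(\la)^T = 0$ is obtained by multiplying $K_i(1/\la) N_i(1/\la)^T = 0$ by $\la^{1+t_i}$; full row normal rank of $\rev_{t_i} N_i(\la)$ follows from that of $N_i(\la)$, and the size conditions for duality are preserved by reversal.

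Second, the two hypotheses of Theorem~\ref{th:1blockfullrank} applied with $\Omega=\{0\}$ to $\rev_1 \mathcal{L}(\la)$ are exactly the present hypotheses: $\rev_1 K_i(0)$ and $\rev_{t_i} N_i(0)$ have full row rank (assumed), and the minimality condition \eqref{minimalitycondition} at $\la_0 = 0$ coincides with \eqref{minimalitycondition_inf}. Theorem~\ref{th:1blockfullrank} then yields that $\rev_1 \mathcal{L}(\la)$ is a linearization at $0$, with state matrix $\rev_1 A(\la)$, of
\[
\widetilde R(\la) := \rev_{t_2} N_2(\la)\bigl[\rev_1 M(\la) + \rev_1 C(\la)\,(\rev_1 A(\la))^{-1}\rev_1 B(\la)\bigr]\rev_{t_1} N_1(\la)^T.
\]

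Third, a routine computation with the identity $\rev_k F(\la) = \la^k F(1/\la)$ gives $(\rev_1 A(\la))^{-1} = \la^{-1} A(1/\la)^{-1}$, so that $\rev_1 C(\la)(\rev_1 A(\la))^{-1}\rev_1 B(\la) = \la\, C(1/\la)A(1/\la)^{-1}B(1/\la)$, and the bracketed factor equals $\la [M(1/\la)+C(1/\la)A(1/\la)^{-1}B(1/\la)]$. Multiplying by the outer reversals of $N_1,N_2$ collects a total factor $\la^{1+t_1+t_2}$, whence $\widetilde R(\la) = \la^{1+t_1+t_2} R(1/\la) = \rev_{1+t_1+t_2} R(\la)$, which is exactly what is required.

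The only mildly delicate point I expect is the verification that $\rev_{t_i} N_i(\la)$ qualifies as a rational dual basis of $\rev_1 K_i(\la)$ (in particular, that it still has full row normal rank and that the sizes work out correctly with $1 + t_i$ appearing in the factor); everything else is bookkeeping with the reversal operation and a direct appeal to Theorem~\ref{th:1blockfullrank}.
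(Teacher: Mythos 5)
Your proposal is correct and follows essentially the same route as the paper: the paper's proof is precisely the one-line reduction of applying Theorem~\ref{th:1blockfullrank} to $\rev_1\mathcal{L}(\la)$ at $0$ and identifying the resulting transfer function $\rev_{t_2}N_2(\la)[\rev_1 M(\la)+\rev_1 C(\la)(\rev_1 A(\la))^{-1}\rev_1 B(\la)]\rev_{t_1}N_1(\la)^T$ with $\rev_{1+t_1+t_2}R(\la)$. Your additional verifications (duality of $\rev_{t_i}N_i$ with $\rev_1 K_i$, matching of the hypotheses at $\la_0=0$) are exactly the bookkeeping the paper leaves implicit.
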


A pencil of the form \eqref{eq:linearizationinf} satisfying the hypotheses in Theorem \ref{th:1blockfullrankinfinity} is called \textit{block full rank linearization at infinity}. In particular, $\mathcal{L}(\la)$ is said to be a block full rank linearization of $R(\la)$ at $\infty$ of grade $1+t_1+t_2$ with state matrix $A(\la)$. In general, a block full rank linearization is said to be \textit{strong} if it is a linearization in $\F\cup \{\infty\}.$

\begin{rem}\label{rem_emptycaseinfinity}\rm The extreme case of $n=0$ in the linear polynomial system matrix \eqref{eq:linearizationinf} is studied in \cite[Theorem 5.5]{local}. It states that the block full rank pencil $L(\la)$ in Theorem \ref{th:1blockfullrankinfinity} is a linearization  of the rational matrix $$G(\la) = N_{2}(\la) M(\la) N_{1}(\la)^{T}$$ at $\infty$ of grade $1 + t_1 + t_2$ with empty state matrix. In this case, $L(\la)$ is said to be a block full rank linearization of $G(\la)$ at $\infty$ of grade $1+t_1+t_2$ with empty state matrix. We notice that, by Lemma \ref{lemma_t}, the integers $t_{1}$ and $t_{2}$ appearing in \cite[Theorem 5.5]{local} are the row degrees of the dual bases $N_{1}(\la)$ and $N_{2}(\la)$, respectively.
\end{rem}

\begin{proof}[Proof of Theorem \ref{th:1blockfullrankinfinity}] The result follows by applying Theorem \ref{th:1blockfullrank} to $\rev_1 	\mathcal{L}(\la) $ at $0.$ Then $\rev_1 	\mathcal{L}(\la) $ is a linearization at $0$ of $$\rev_{t_2}N_{2}(\la) [ \rev_1 M(\la)+\rev_1 C(\la)(\rev_1 A(\la))^{-1}\rev_1 B(\la) ] \rev_{t_1} N_{1}(\la)^{T}= \rev_{1+t_1+t_2}R(\la).$$
\end{proof}

\begin{example} \rm We now consider the rational matrix $$R(\la):= \displaystyle\sum_{k=0}^{2} A_{k} \frac{\la^{k}}{(\la-\epsilon)^{2}} +I_n \frac{1}{\la}\in\F(\la)^{n\times n} \text{ and the set }\Omega:=\F-\{\epsilon\},$$  for some $\epsilon\in\F$. Then we define the following linear polynomial system matrix
	$$\mathcal{L}(\la):=\left[\begin{array}{c|cc|c}
	\la I_n & 0 & (\la-\epsilon)I_n & 0 \\ \hline
	0 & A_{2} & 0 & -I_n \\
	-(\la-\epsilon)I_n & 0 & \la A_1 + A_0 & \la I_n \\ \hline
	0 & -I_n & \la I_n & 0
	\end{array}\right]=:\left[\begin{array}{c|c|c}
	A(\la) & B(\la) & 0 \\ \hline
	- C(\la ) & M(\la) & K_{2}(\la)^{T} \\ \hline
	0 & K_{1}(\la) & 0
	\end{array}\right],$$
	with state matrix $A(\la)=\la I_n$. We consider the dual rational bases $N_{1}(\la):=N_{2}(\la):=\begin{bmatrix}
	\dfrac{\la I_n}{\la-\epsilon} & \dfrac{I_n}{\la - \epsilon}
	\end{bmatrix}$, which have row degrees $t_{1}=t_2=0$. Then $R(\la)$ can be written as $R(\la)=N_{2}(\la) [ M(\la)+C(\la)A(\la)^{-1}B(\la) ] N_{1}(\la)^{T}$. Notice that $\rev_{t_{i}}N_{i}(\la)$ and $\rev_1 K_i(\la)$ have both full row rank at $0$, for $i=1,2$, and that condition \eqref{minimalitycondition_inf} is satisfied since $\rev_1 A(0)=I_n$. Thus, by Theorem \ref{th:1blockfullrankinfinity}, $\mathcal{L}(\la)$ is a linearization of $R(\la)$ at $\infty$ of grade $1+t_1+t_2=1 $ with state matrix $A(\la)$. In addition, by Theorem \ref{th:1blockfullrank}, $\mathcal{L}(\la)$ is a linearization of $R(\la)$ in $\Omega$, since $N_{i}(\la)$ and $K_i(\la)$ have both full row rank in $\Omega$, for $i=1,2$, and condition \eqref{minimalitycondition} is satisfied in $\Omega$. Observe that, if $R(\la)$ has symmetric coefficients, $\mathcal{L}(\la)$ preserves the symmetry.
	
\end{example}

\begin{rem}\rm If we want a linearization as in \eqref{eq:linearizationinf} to be a linearization at all finite and infinite points we need, besides minimality conditions, the matrices $K_{1}(\la)$ and $K_2(\la)$ being minimal bases with all their row degrees equal to 1. Notice that if a pencil $K(\la)$ has full row rank in $\F$ and, in addition, $\rev_1 K(\la)$ has full row rank at $0$ then $K(\la)$ is a minimal basis. Conversely, if $K(\la)$ is a minimal basis with all its row degrees equal to one then $K(\la)$ has full row rank in $\F$ and $\rev_1 K(\la)$ has full row rank at $0$.
\end{rem}

\subsection{Strong block minimal bases linearizations as block full rank linearizations}\label{SB_asBF}

By using strong block minimal bases pencils, in \cite[Theorem 5.11]{strong}  (strong) linearizations are constructed that contain the complete spectral information of rational matrices, finite and infinite,  as well as the information about their minimal bases and indices \cite{minimal}, when the corresponding rational matrix $R(\la)$ is expressed in the form $R(\la) = D(\la) + C (\la I_n - A)^{-1} B$, where  $D(\la)$ is the polynomial part of $R(\la)$ with $\deg (D(\la)) >1,$ and $C (\la I_n - A)^{-1} B$ is a minimal state-space realization of the strictly proper part of $R(\la).$ We will see that such linearizations satisfy Theorems \ref{th:1blockfullrank}, with $\Omega=\F$, and \ref{th:1blockfullrankinfinity}.

For the construction, in \cite{strong} $
L(\la) :=
\left[
\begin{array}{cc}
M(\la) & K_2 (\la)^T \\
K_1 (\lambda) &0
\end{array}
\right]
$
is considered to be a strong block minimal bases pencil  as in \eqref{eq:minbaspencil} associated to $D(\la)$ with sharp degree, that is, such that $\deg(D(\la)) = \deg(N_2(\la)) +  \deg(N_1(\la)) + 1$, where $N_1(\la) $ (respectively $N_2(\la)$) is a minimal basis dual to $K_1(\la)$ (respectively $K_2(\la)$). Then, constant matrices $\widehat{K}_1 \in \FF^{m \times (m + \widehat{m})}$ and $\widehat{K}_2 \in \FF^{p \times (p + \widehat{p})}$ and matrices $\widehat{N}_1(\lambda)^{T} \in \FF[\lambda]^{(m + \widehat{m})\times \widehat{m}}$ and $\widehat{N}_2(\lambda)^{T} \in\FF[\lambda]^{(p + \widehat{p})\times\widehat{p}}$ exist such that, for $i=1,2$,
\[
U_i(\lambda) =
\begin{bmatrix}
K_i(\lambda) \\ \widehat{K}_i
\end{bmatrix}
\quad \mbox{and} \quad
U_i(\lambda)^{-1}=
\begin{bmatrix}
\widehat{N}_i(\lambda)^T & N_i(\lambda)^T
\end{bmatrix}
\]
are unimodular. Finally, the following linear polynomial matrix is constructed
\begin{equation} \label{eq:ratstrongblockmin}
\mathcal{L}(\la) = \left[
\begin{array}{c|cc}
X(\la I_n -A)Y & \phantom{a} XB\widehat{K}_1 \phantom{a} & 0 \\ \hline \phantom{\Big|}
-\widehat{K}_2^T C Y \phantom{\Big|}& M(\la) & K_2(\la)^T \\
0 & K_1 (\la) & 0
\end{array}
\right],
\end{equation}
where $X, Y \in\FF^{n\times n}$ are any nonsingular constant matrices. With  these assumptions, $\mathcal{L}(\la)$
is a strong linearization of $R(\la)$ \cite[Theorem 5.11]{strong}.

This result follows as a corollary of Theorems \ref{th:1blockfullrank} and \ref{th:1blockfullrankinfinity} as well. First, since $\widehat{K}_iN_i(\la)^{T}=I$,  notice that $R(\la)$ can be written as in \eqref{eq:rat_matrix}:
$$	N_{2}(\la) [ M(\la)	+\widehat{K}_2^T C Y Y^{-1}(\la I_n-A)^{-1}X^{-1}XB\widehat K_{1} ] N_{1}(\la)^{T} = D(\la) + C (\la I_n - A)^{-1} B =R(\la),$$
and, in addition, conditions \eqref{minimalitycondition}, with $\Omega =\F$, and \eqref{minimalitycondition_inf} are satisfied. More precisely, we have that
\begin{equation*}
\rank\begin{bmatrix} X(\la I_n -A)Y \\ -N_2(\la)\widehat{K}_2^T C Y\end{bmatrix}=\rank\begin{bmatrix} X( \la I_n -A)Y \\ -C Y\end{bmatrix}= n, \text{and}
\end{equation*}
\begin{equation*}
\rank\begin{bmatrix} X(\la I_n -A)Y & XB\widehat{K}_1N_1(\la)^{T} \end{bmatrix}=\rank\begin{bmatrix} X(\la I_n -A)Y & XB \end{bmatrix}=n
\end{equation*}
for all $\la\in\F,$ since 
$X$ and $Y$ are nonsingular, and the realization $C (\la I_n - A)^{-1} B$ is minimal. Therefore, condition \eqref{minimalitycondition} is satisfied and, thus, $\mathcal{L}(\la)$
is a linearization of $R(\la)$ in $\F$ by Theorem \ref{th:1blockfullrank}. Moreover, we have that condition \eqref{minimalitycondition_inf} is satisfied since $\rev_1 (\la I_n - A)$ evaluated at $0$ is just $I_n$. Then, by Theorem \ref{th:1blockfullrankinfinity},  $\mathcal{L}(\la)$ is a linearization of $R(\la)$ at infinity of grade $\deg(N_2(\la)) +  \deg(N_1(\la)) + 1=\deg(D(\la)) $.

\section{Block full rank linearizations for ``the automatic rational approximations of NLEPs'' in \cite{automatic}}\label{automaticlin}

In this section we study the  precise properties of the linearizations for rational approximations of nonlinear eigenvalue problems (NLEPs) in \cite{automatic}. We will see that they satisfy Theorem \ref{th:1blockfullrank} in a particular subset of $\C$, where $\C$ is the field of complex numbers, and in the whole field $\C$ under mild conditions.

We start by defining a NLEP \cite{guttel-tisseur-2017}. Given a non-empty open set $\Omega \subseteq \C$ and an analytic matrix-valued function
\[
\begin{array}{cccc}
F : & \Omega & \rightarrow &  \mathbb{C}^{n\times n} \\
& \la & \mapsto &  F(\la),
\end{array}
\]
the NLEP consists of computing scalars $\la_0 \in \Omega$ (eigenvalues) and nonzero vectors $v \in \C^n$ (eigenvectors) such that
\[ \begin{array}{c}  \phantom{\Big(}
F(\la_0) v = 0, \phantom{\Big)}\end{array}
\]
under the regularity assumption $\det (F(\la)) \not\equiv 0$. Since a direct solution of NLEPs is usually not possible, they are often solved  numerically via rational approximation and by solving the corresponding REP with linearizations adapted to the structure of the obtained rational matrix. An approach to obtain  an automatic rational approximation for the NLEP  in a region $\Omega$ is given in \cite{automatic}. The authors of \cite{automatic} consider the $n\times n$ nonlinear matrix $F(\la)$ of the NLEP written in the form
\begin{equation}\label{nonlinearmatrix}
F(\la)=Q(\la)+\displaystyle\sum_{i=1}^{s}(C_{i}-\la D_{i})g_{i}(\la)
\end{equation}
with $Q(\la)$ a polynomial matrix,  $C_i$ and $D_i$ $n\times n$ constant matrices, and $g_{i}(\la)$ nonlinear scalar functions. Then, a CORK linearization of $Q(\la)$  of those introduced in \cite{cork} is considered, and the functions $g_{i}(\la)$ are approximated by rational functions employing the adaptive Antoulas--Anderson (AAA) algorithm \cite{AAA}, or its set-valued generalization  presented in \cite{automatic}. We recall the definition of CORK linearization as given in \cite{automatic}.

\begin{deff}
	Let $G(\la)$ be an $n\times n$ rational matrix written as
	\begin{equation}\label{rationalmatrix}
	G(\la)=\displaystyle\sum_{i=0}^{k-1}(A_{i}-\la B_{i})f_{i}(\la),
	\end{equation}
	where $f_{i}(\la)$ are scalar rational functions with $f_{0}(\la) \equiv 1$,  and $A_i$, $B_i$ are $n\times n$ constant matrices. Define $$f(\la):=[f_{0}(\la)\quad \cdots \quad f_{k-1}(\la)]^{T},$$ and assume that the rational functions $f_i(\la)$ satisfy a linear relation
	\begin{equation}\label{linearrelation}
	(X-\la Y)f(\la)=0,
	\end{equation}
	where $\rank(X-\la Y)=k-1\text{  for all }\la\in\C,$ and $X-\la Y$ has size $(k-1)\times k.$ Then the matrix pencil
	$$ \mathcal{L}_{G}(\la)=\left[\begin{array}{c}\begin{array}{ccc}
	A_{0}-\la B_{0} & \cdots & A_{k-1}-\la B_{k-1}
	\end{array} \\ \hdashline[2pt/2pt] \phantom{\Big|}
	(X-\la Y)\otimes I_{n}
	\end{array}\right]  $$
	is called a CORK linearization of $G(\la)$.
\end{deff}

If the vector $f(\la)$ is polynomial then $G(\la)$ in \eqref{rationalmatrix} is a polynomial matrix and $\mathcal{L}_{G}(\la)$ is a linearization of $G(\la)$ in $\mathbb{C}$, in particular, $\mathcal{L}_{G}(\la)$ is a block full rank linearization of $G(\la)$ in $\C$ with empty state matrix. However, if $f(\la)$ is a rational vector then $ \mathcal{L}_{G}(\la)$ is not, in general, a linearization in the sense of \cite{strong}, that is, in $\C$ but it is a linearization in a local sense \cite{local}. More precisely, it is a block full rank linearization in all the subsets where the rational vector $f(\la)$ is defined, i.e., has no poles. Such result is stated in the next theorem.

\begin{theo}\label{th_CORK} Let $\Omega$ be a nonempty subset of $\C$ where the rational vector $f(\la)$ in \eqref{linearrelation} is defined. Then a CORK linearization $ \mathcal{L}_{G}(\la)$ of a rational matrix $G(\la)$ as in \eqref{rationalmatrix} is a block full rank linearizacion of $G(\la)$ in $\Omega$ with empty state matrix.
	
\end{theo}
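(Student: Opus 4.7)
The plan is to recognize the CORK pencil $\mathcal{L}_G(\la)$ as a block full rank pencil with empty state matrix and $K_2$ empty, and then apply Theorem \ref{th:1blockfullrank} in the version stated in Remark \ref{rem_emptycasefinite}. Concretely, I would set
\[
M(\la) := \begin{bmatrix} A_0-\la B_0 & \cdots & A_{k-1}-\la B_{k-1}\end{bmatrix},\qquad K_1(\la) := (X-\la Y)\otimes I_n,
\]
with $K_2(\la)$ taken to be empty, so that the CORK pencil is exactly the block full rank pencil $\left[\begin{smallmatrix} M(\la)\\ K_1(\la)\end{smallmatrix}\right]$ (here $K_1$ has full row normal rank $(k-1)n$ because $\mathrm{rank}(X-\la Y)=k-1$ for all $\la\in\C$).

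Next I would exhibit dual rational bases. Following Remark \ref{rem:empty}, since $K_2(\la)$ is empty I take $N_2(\la):=I_n$, which is trivially invertible in $\Omega$. For $N_1(\la)$ dual to $K_1(\la)$ I would use the Kronecker identity to propose
\[
N_1(\la)^T := f(\la)\otimes I_n \in \C(\la)^{kn\times n},
\]
and check the three defining properties of a dual rational basis: (i) $K_1(\la)N_1(\la)^T=\bigl((X-\la Y)f(\la)\bigr)\otimes I_n=0$ by the linear relation \eqref{linearrelation}; (ii) the sizes add up, $(k-1)n+n=kn$; and (iii) $N_1(\la)$ has full row normal rank $n$, which is immediate because $f_0(\la)\equiv 1$ so the first $n$ columns of $N_1(\la)$ form $I_n$.

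The factorization required in \eqref{eq:rat_matrix} is then a direct computation:
\[
N_2(\la)M(\la)N_1(\la)^T = \begin{bmatrix} A_0-\la B_0 & \cdots & A_{k-1}-\la B_{k-1}\end{bmatrix}(f(\la)\otimes I_n) = \sum_{i=0}^{k-1}(A_i-\la B_i)f_i(\la) = G(\la).
\]

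It remains to verify the full row rank hypotheses in $\Omega$. For $K_1(\la)$ this is immediate from $\mathrm{rank}(X-\la Y)=k-1$ for every $\la\in\C$. For $N_1(\la)$ the point I want to emphasize, which is really the only nontrivial step, is that although $f(\la)$ is only rational in general, its definedness in $\Omega$ together with $f_0(\la)\equiv 1$ forces $N_1(\la_0)= f(\la_0)^T\otimes I_n$ to be a well-defined matrix of rank $n$ for every $\la_0\in\Omega$ (again because the $f_0$ block contributes the identity). With all hypotheses in place, Remark \ref{rem_emptycasefinite} (i.e.\ the $n=0$ case of Theorem \ref{th:1blockfullrank}) delivers the conclusion. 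The main potential obstacle is therefore bookkeeping: making sure the chosen $N_1(\la)$ is genuinely a dual rational basis in the sense of the paper and that its full row rank in $\Omega$ follows cleanly from $f_0\equiv 1$; once this is set up, the rest is a direct application of the already-proven Theorem \ref{th:1blockfullrank}.
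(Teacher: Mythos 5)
Your proposal is correct and follows essentially the same route as the paper: identify $\mathcal{L}_G(\la)$ as a block full rank pencil with $K_1(\la)=(X-\la Y)\otimes I_n$ and $K_2(\la)$ empty, take $N_1(\la)=f(\la)^T\otimes I_n$ and $N_2(\la)=I_n$, and invoke Remark \ref{rem_emptycasefinite}. The paper states this more tersely, while you additionally spell out the duality and full-row-rank verifications (using $f_0\equiv 1$ and the definedness of $f$ in $\Omega$), which the paper leaves implicit.
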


\begin{proof} Notice that, by taking $M(\la):=\left[\begin{array}{ccc}
	A_{0}-\la B_{0} & \cdots & A_{k-1}-\la B_{k-1}\end{array}\right],$ $K_1(\la):=
	(X-\la Y)\otimes I_{n},$ and $K_2(\la)$ empty, $\mathcal{L}_{G}(\la)$ is a block full rank pencil. Moreover, $f(\la)^T$ is a rational basis dual to $X-\la Y.$ Then we apply Remark \ref{rem_emptycasefinite} with $N_1(\la):=f(\la)^T\otimes I_n$ and  $N_2(\la)=I_n$.
\end{proof}

Once CORK linearizations and some of their properties have been revised, we recall the AAA approximation of scalar functions. A given nonlinear function $g: \C \longrightarrow \C $ is approximated in \cite{automatic} on a set $\Sigma\subset \C$ by a rational function $r(\la)$ in barycentric form, that is,
\begin{equation}\label{barycentricaprox}
r(\la)=\displaystyle\sum_{j=1}^{m}\frac{g(z_{j})w_{j}}{\la-z_{j}}\Big/\displaystyle\sum_{j=1}^{m}\frac{w_{j}}{\la-z_{j}},
\end{equation} where $z_{1},\ldots,z_{m}$ are distinct support points and $w_{1},\ldots,w_{m}$ are nonzero weights,  that can be automatically chosen as explained in \cite{AAA}. In this case, $\displaystyle\lim_{\la \to z_{j}}r(\la)=g(z_{j}).$ It is shown in \cite[Proposition 2.1]{automatic} that \eqref{barycentricaprox} can be written as
$$r(\la)=\left[g(z_{1})w_{1}\quad \cdots \quad g(z_{m})w_{m}\right]\left[\begin{array}{ccccc}
w_{1} & w_{2} & \cdots & w_{m-1} & w_{m} \\
\la-z_{1} & z_{2}-\la \\
& \la-z_{2} & \ddots \\
& & \ddots & z_{m-1}-\la \\
& & & \la-z_{m-1} & z_{m}-\la
\end{array}\right]^{-1} \left[\begin{array}{c}
1\\
0\\
\vdots\\
0
\end{array}\right].$$
That is, $r(\la)$ can be written as a generalized state-space realization. Then, the pencil
\begin{equation} \label{polsysmat_escalarrational}
{\small P(\la) := \left[
	\begin{array}{ccccc|c}
	w_{1} & w_{2} & \cdots & w_{m-1} & w_{m} & -1 \\
	\la-z_{1} & z_{2}-\la & & & & 0 \\
	& \la-z_{2} & \ddots & & &  \vdots \\
	& & \ddots & z_{m-1}-\la & & \vdots \\
	& & & \la-z_{m-1} & z_{m}-\la & 0 \\ \hline \phantom{\Big|}
	g(z_{1})w_{1} & g(z_{2})w_{2} & \cdots & g(z_{m-1})w_{m-1} & g(z_{m})w_{m} & 0
	\phantom{\Big|} \end{array}
	\right]:=\left[
	\begin{array}{c|c}
	E-\la F & -b\\ \hline  \phantom{\Big|}
	a^{T} & 0
	\end{array}
	\right]}
\end{equation} is a linear polynomial system matrix of $r(\la)$ (i.e., with transfer function $r(\la)$) by considering $E-\la F$ as state matrix. In order to know what pole and zero information of $r(\la)$ we can obtain from this realization, we consider in Proposition \ref{minimality_AAA} the polynomial system matrix $P(\la)$ and we study its minimality.  First, we prove Lemma \ref{lemma_statematrix}. In both Proposition \ref{minimality_AAA} and Lemma \ref{lemma_statematrix}, we consider $r(\la)$ written as the following quotient of polynomials
\begin{equation}\label{quotient}
r(\la):=\dfrac{p(\la)}{q(\la)},
\end{equation}
where $$p(\la):=\left(\displaystyle\prod_{j=1}^{m}(\la-z_{j})\right)\left(\displaystyle\sum_{j=1}^{m}\frac{g(z_{j})w_{j}}{\la-z_{j}}\right) \text{ and } q(\la):=\left(\displaystyle\prod_{j=1}^{m}(\la-z_{j})\right)\left(\displaystyle\sum_{j=1}^{m}\frac{w_{j}}{\la-z_{j}}\right).$$ Note that the representation of the rational function \eqref{quotient}  might not be irreducible. We will see that the irreducibility of \eqref{quotient} is the key property for the minimality of $P(\la)$.

\begin{lem}\label{lemma_statematrix}  The pencil $E-\la F$ in \eqref{polsysmat_escalarrational} is a strong block minimal bases pencil associated to  the polynomial $q(\la)$ in \eqref{quotient}. \end{lem}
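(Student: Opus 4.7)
The plan is to exhibit $E-\la F$ explicitly as a pencil of the form \eqref{eq:minbaspencil}, identify its dual minimal bases, and verify the associated-polynomial formula \eqref{eq:Dpolinminbaslin}.

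First I would decompose $E-\la F$ by setting $M(\la) := [w_1, \ldots, w_m]$ to be its constant top row, $K_1(\la)$ to be the $(m-1) \times m$ bidiagonal pencil forming the lower block, and $K_2(\la)$ empty (so that $N_2(\la)$ may be taken to be the scalar $1$). Under this identification, what remains is to verify \textbf{(a)} that $K_1(\la)$ is a minimal basis with all row degrees equal to $1$, \textbf{(b)} that there is a minimal basis $N_1(\la)$ dual to $K_1(\la)$ satisfying the row-degree-equality condition, and \textbf{(c)} that $M(\la)\, N_1(\la)^T = q(\la)$.

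For (a) the characterization of minimal bases recalled in Section \ref{sec.prelim} reduces the statement to row-reducedness together with full row rank of $K_1(\la_0)$ at every $\la_0 \in \F$. Row reducedness is immediate, since the highest-row-degree coefficient matrix of $K_1(\la)$ is the $(m-1)\times m$ bidiagonal matrix with $1$'s on the main diagonal and $-1$'s on the superdiagonal. For the pointwise rank, I would split into two cases: if $\la_0 \notin \{z_1, \ldots, z_m\}$, deleting the last column of $K_1(\la_0)$ produces an upper-triangular submatrix whose diagonal is $\la_0 - z_j$ for $j=1,\ldots,m-1$, all nonzero; whereas if $\la_0 = z_k$ for some $k$, the $k$-th column of $K_1(z_k)$ vanishes identically (since its only potentially nonzero entries $z_k - z_k$ and $\la_0 - z_k$ are both zero), and deleting it yields two triangular blocks whose diagonal entries are of the form $z_k - z_j$ with $j \neq k$, all nonzero by distinctness of the support points.

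For (b) and (c), the natural candidate is the single-row polynomial matrix $N_1(\la) := \bigl[\,\prod_{j\neq 1}(\la - z_j),\,\ldots,\,\prod_{j\neq m}(\la - z_j)\,\bigr]$, whose entries all have degree $m-1$. The duality $K_1(\la)\, N_1(\la)^T = 0$ then follows from the telescoping identity $(\la - z_i)\prod_{j\neq i}(\la - z_j) + (z_{i+1} - \la)\prod_{j\neq i+1}(\la - z_j) = 0$ applied row by row. That $N_1(\la)$ is itself a minimal basis is straightforward: each entry has leading coefficient $1$, so the row is row reduced, and at every $\la_0$ at least one entry is nonzero (the $k$-th one when $\la_0 = z_k$, all of them otherwise), giving full row rank. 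The row-degree-equality condition is trivial since $N_1(\la)$ has a single row, as is its analogue for the empty $N_2(\la)$. Finally, $N_2\, M(\la)\, N_1(\la)^T = \sum_{i=1}^{m} w_i \prod_{j \neq i}(\la - z_j)$, and comparing with \eqref{quotient} by factoring $\prod_{j=1}^{m}(\la - z_j)$ out of the definition of $q(\la)$ shows that this sum coincides with $q(\la)$.

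I expect the main point of care to lie in (a), specifically in the case $\la_0 = z_k$, where one must first recognize that the $k$-th column of $K_1(z_k)$ is identically zero and only then choose which column to delete in order to extract a nonsingular submatrix; the rest is essentially bookkeeping, once the explicit dual $N_1(\la)$ has been guessed.
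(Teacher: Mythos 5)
Your proposal is correct and follows essentially the same route as the paper: split off the constant top row $M=[w_1,\ldots,w_m]$, show the bidiagonal block $K(\la)$ is a minimal basis with row degrees $1$, exhibit the dual minimal basis $N(\la)=\prod_{j}(\la-z_j)\bigl[\tfrac{1}{\la-z_1}\ \cdots\ \tfrac{1}{\la-z_m}\bigr]$ (identical to your $N_1$), and compute $MN(\la)^T=q(\la)$. You merely supply more detail than the paper on the pointwise full-rank check at $\la_0=z_k$ and on the telescoping duality identity, both of which are correct.
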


\begin{proof} We set
	\begin{equation}\label{eq:E-laF} {\small
		E-\la F=\left[
		\begin{array}{ccccc}
		w_{1} & w_{2} & \cdots & w_{m-1} & w_{m}  \\ \hline
		\la-z_{1} & z_{2}-\la & & &  \\
		& \la-z_{2} & \ddots & &  \\
		& & \ddots & z_{m-1}-\la &  \\
		& & & \la-z_{m-1} & z_{m}-\la  \end{array}
		\right]=:\left[
		\begin{array}{c}
		M \\ \hline \phantom{\Big|}
		K(\la)
		\end{array}
		\right].}
	\end{equation}
	Since $z_{1},\ldots,z_{m}$ are distinct, $K(\lambda_{0})$ has full row rank for all $\lambda_{0}\in\C$. In addition, $K(\lambda)$ is row reduced because its highest row degree coefficient matrix $$K_{hr}=\left[ {\begin{array}{cccccc}
		1 & -1 & 0 &  \\
		& 1& -1 & 0 &  \\
		& &\ddots &\ddots & \ddots &  \\
		& & &1& -1 & 0 \\
		& & & & 1 & -1
		\end{array} } \right] $$
	has full row rank. We conclude that $K(\lambda)$ is a minimal basis. Let us denote
	\begin{equation}\label{eq_dual}
	{\small N(\la):=\displaystyle\prod_{j=1}^{m}(\la-z_{j}) \left[\frac{1}{\la-z_1}\quad \cdots \quad \frac{1}{\la-z_m} \right].}
	\end{equation}
	Then, it is not difficult to prove that $N(\lambda)$ is also a minimal basis, taking again into account that $z_{1},\ldots,z_{m}$ are distinct. Moreover, since $K(\lambda)N(\lambda)^{T}=0$ and $\left[ {\begin{array}{cc}
		K(\lambda) \\
		N(\lambda) \\
		\end{array} } \right]$ is a square matrix, we have that $K(\lambda)$ and $N(\lambda)$ are dual minimal bases. In addition, all the row degrees of $K(\lambda)$ are equal to $1$ and the row degree of $N(\lambda)$ is equal to $m-1.$ Hence, $E-\la F$ is a strong block minimal bases pencil associated to the polynomial matrix $MN(\lambda)^{T}=q(\la)$.	
\end{proof}

$E-\la F$ being a strong block minimal bases pencil associated to the polynomial $q(\la)$ implies that $E-\la F$ is a  (strong) linearization of $q(\la)$ and, in particular, that the determinant of $E-\la F$ is equal to $q(\la)$ up to a scalar multiple. This fact is used  to prove the following result.

\begin{prop}\label{minimality_AAA} Consider the rational function $r(\la)$ in \eqref{quotient} and its linear polynomial system matrix $P(\la)$ in \eqref{polsysmat_escalarrational}. Then, $P(\la)$ is not minimal at $\la_0\in\C$ if and only if $\la_0$ is a zero of both polynomials $p(\la)$ and $q(\la)$.
\end{prop}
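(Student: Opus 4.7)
The plan is to split into cases according to whether $q(\la_0)=0$ and exploit the lower bidiagonal shape of $E-\la F$ in \eqref{eq:E-laF} to write down its null vectors explicitly. By Lemma \ref{lemma_statematrix}, $E-\la F$ is a strong linearization of $q(\la)$, so $\det(E-\la F)=\gamma\, q(\la)$ for some nonzero $\gamma\in\C$. Hence, whenever $q(\la_0)\neq 0$, the state matrix $E-\la_0 F$ is invertible and both rank conditions in the definition of minimality of $P(\la)$ at $\la_0$ are automatically satisfied. I would also record that $q(z_j)=w_j\prod_{k\neq j}(z_j - z_k)\neq 0$ (since each $w_j\neq 0$), so any root $\la_0$ of $q$ is necessarily distinct from every support point $z_j$; this guarantees that no denominator vanishes in the calculations that follow.

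The substantive case is $q(\la_0)=0$. Since the bottom block $K(\la_0)$ of $E-\la_0 F$ has full row rank $m-1$ (as established inside the proof of Lemma \ref{lemma_statematrix}), the $m\times m$ pencil $E-\la_0 F$ has rank exactly $m-1$, and both its left and right null spaces are one-dimensional. I would solve the bidiagonal system $(E-\la_0 F)v=0$ by forward substitution to obtain a right null vector $v$ with
$$ v_j \;=\; v_1\,\frac{\la_0-z_1}{\la_0-z_j},\qquad j=1,\ldots,m,$$
the first-row compatibility being exactly $\sum_{j=1}^m w_j/(\la_0-z_j) = q(\la_0)/\prod_k(\la_0-z_k)=0$. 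Substituting into the bottom row $a^T$ of $P(\la)$ then gives
$$ a^T v \;=\; v_1(\la_0-z_1)\sum_{j=1}^m\frac{g(z_j)\,w_j}{\la_0-z_j} \;=\; \frac{v_1(\la_0-z_1)}{\prod_k(\la_0-z_k)}\,p(\la_0),$$
so the row-block minimality condition $\rank\!\begin{bmatrix}E-\la_0 F\\ a^T\end{bmatrix}=m$ fails if and only if $p(\la_0)=0$.

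Finally, the analogous forward substitution for the left null vector $u^T=[u_0,u_1,\ldots,u_{m-1}]$ of $E-\la_0 F$ will yield $u_j = -u_0\sum_{k=1}^j w_k/(\la_0-z_k)$, so every nonzero left null vector has $u_0\neq 0$. Since $b=[1,0,\ldots,0]^T$, this forces $u^T(-b) = -u_0 \neq 0$, and therefore the column-block minimality condition $\rank[\,E-\la_0 F\;\;-b\,]=m$ is automatic whenever $q(\la_0)=0$. Combining the two cases yields the claim: $P(\la)$ fails to be minimal at $\la_0$ precisely when both $q(\la_0)=0$ and $p(\la_0)=0$. The only real difficulty is producing the explicit null-vector formulas, but this reduces to routine back-substitution once the identities $q(z_j)\neq 0$ and $\la_0\neq z_j$ are in place.
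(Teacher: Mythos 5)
Your proof is correct, and it overlaps with the paper's in the ``only if'' direction while diverging in the ``if'' direction. For the converse, your explicit right null vector $v$ with $v_j=v_1(\la_0-z_1)/(\la_0-z_j)$ is just a scalar multiple of the vector $N(\la_0)^T$ from Lemma \ref{lemma_statematrix}, and evaluating $[w_1\;\cdots\;w_m]$ and $a^T$ against it is exactly what the paper does; the observation $q(z_j)=w_j\prod_{k\neq j}(z_j-z_k)\neq 0$, which guarantees $\la_0\neq z_j$ and makes your formulas well defined, is a worthwhile addition, as is your left-null-vector computation showing $u_0\neq 0$, which proves the identity $\rank\!\left[\begin{smallmatrix}E-\la_0F & -b\end{smallmatrix}\right]=m$ that the paper only asserts (there it follows from the full row rank of $K(\la_0)$ together with the $-1$ sitting in a column where the $K$-rows vanish). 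Where you genuinely depart is the ``if'' direction: the paper computes $\det P(\la)=\alpha p(\la)$ via the Schur complement and argues that a common root of $p$ and $q$ makes the zero multiplicity of $\la_0$ in $r(\la)$ differ from that in $\det P(\la)$, contradicting minimality through the local spectral characterization; you instead reuse the same null vector to show directly that $\rank\!\left[\begin{smallmatrix}E-\la_0F\\ a^T\end{smallmatrix}\right]$ drops precisely when $p(\la_0)=0$. Your route is more elementary and uniform --- a single rank computation settles both implications without invoking the relation between the spectral structures of a minimal polynomial system matrix and its transfer function --- at the modest cost of the back-substitution bookkeeping.
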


\begin{proof} Consider $P(\la) =\left[
	\begin{array}{c|c}
	E-\la F & -b\\ \hline  \phantom{\Big|}
	a^{T} & 0
	\end{array}
	\right]$ as in \eqref{polsysmat_escalarrational}. By Lemma \ref{lemma_statematrix},
	\begin{equation}\label{det_statemat}
	\det (E-\la F)=\alpha q(\la)\text{ with }\alpha\neq 0.
	\end{equation}
	In addition, since the Schur complement of $E-\la F$ in $P(\la)$ is $r(\la)$, we have that
	\begin{equation}\label{det_polysystemmat}
	\det(P(\la))=\det(E-\la F) r(\la)=\alpha q(\la)\dfrac{p(\la)}{q(\la)}=\alpha p(\la).
	\end{equation}
	Now, assume that $\la_0$ is a zero of both polynomials $p(\la)$ and $q(\la)$. That is, we can cancel out at least one factor of the form $(\la-\la_0)$ in both numerator and denominator of $r(\la)$. Then, the algebraic multiplicity of $\la_0$ as a zero of $r(\la)$ is not the same as the algebraic multiplicity of $\la_0$ as a zero of $P(\la)$. Therefore, $P(\la)$ is not minimal at $\la_0$. For the converse, assume that $P(\la)$ is not minimal at $\la_0$. Then, $\rank \begin{bmatrix}
	E-\la_0 F \\
	a^T
	\end{bmatrix}<m$, since $\rank \begin{bmatrix}
	E-\la_0 F & -b
	\end{bmatrix}=m$. Actually, $\rank \begin{bmatrix}
	E-\la_0 F \\
	a^T
	\end{bmatrix}=m-1$, as the sub-matrix $${\small	K(\la_0)=\left[\begin{array}{ccccc}
		\la_0-z_{1} & z_{2}-\la_0 & & &  \\
		& \la_0-z_{2} & \ddots & &  \\
		& & \ddots & z_{m-1}-\la_0 &  \\
		& & & \la_0-z_{m-1} & z_{m}-\la_0  \end{array}
		\right]}$$ contains a non-zero minor of order $m-1$ for all $\la_0\in\C$, where $K(\la)$ is the matrix appearing in \eqref{eq:E-laF}. Therefore, by using the notation $\mathcal{N}_r(\cdot)$ for the right nullspace, $\dim \mathcal{N}_r \left(\begin{bmatrix}
	E-\la_0 F \\
	a^T
	\end{bmatrix}\right) =1$ and $\dim \mathcal{N}_r (K(\la_0)) =1$. Then, $\mathcal{N}_r \left(\begin{bmatrix}
	E-\la_0 F \\
	a^T
	\end{bmatrix}\right) = \mathcal{N}_r (K(\la_0))$, since  $\mathcal{N}_r \left( \begin{bmatrix}
	E-\la_0 F \\
	a^T
	\end{bmatrix}\right) \subseteq \mathcal{N}_r (K(\la_0))$ and both have the same dimension.  Actually,  $\mathcal{N}_r (K(\la_0))=\text{Span}\{N(\la_0)^{T}\}$, where $N(\la)$ is the polynomial matrix in \eqref{eq_dual}. Hence, $\begin{bmatrix}
	E-\la_0 F \\
	a^T
	\end{bmatrix} N(\la_0)^{T} =0$ and, therefore, $\left[
	\begin{array}{ccccc}
	w_{1} & w_{2} & \cdots & w_{m} \end{array}\right]N(\la_0)^{T}=0$ and $\left[
	\begin{array}{ccccc}
	g(z_1)w_{1} & g(z_2)w_{2} & \cdots & g(z_m)w_{m} \end{array}\right]N(\la_0)^{T}=0$. That is, $\la_0$ is a root of both $q(\la)$ and $p(\la)$.
	
\end{proof}

With these tools at hand, we go back to the original NLEP. Let $F(\la)$ be the nonlinear matrix function in \eqref{nonlinearmatrix}. Then, each function $g_{i}(\la)$ is approximated in \cite{automatic} on a set $\Sigma\subset \C$ by a rational function $r_{i}(\la)$ as in \eqref{barycentricaprox}, i.e.,
\begin{equation*}
g_{i}(\la)\approx r_{i}(\la)=\displaystyle\sum_{j=1}^{\ell_{i}}\frac{g_{i}(z_{j}^{i})w_{j}^{i}}{\la-z_{j}^{i}}\Big/\displaystyle\sum_{j=1}^{\ell_i}\frac{w_{j}^{i}}{\la-z_{j}^{i}},
\end{equation*}
where $\ell_{i}$ is the number of support points $z_{j}^{i}$ and weights $w_{j}^{i}$ for each $i=1,\ldots,s.$ For that, one can use the AAA algorithm on each function $g_{i}(\la)$ separately \cite{AAA}, or one can use the set-valued AAA algorithm in \cite[Section 2.2]{automatic}, so that the rational approximants $r_{i}(\la)$ are all constructed simultaneously and sharing the same support points $z_{j}^{i}:=z_{j}$ and weights $w_{j}^{i}:= w_{j}$. By using any of the two approaches above, the following approximation of $F(\la)$ on $\Sigma$ is obtained:
\begin{equation}\label{aproxmatrix}
F(\la)\approx R(\la):= Q(\la)+\displaystyle\sum_{i=1}^{s}(C_{i}-\la D_{i})r_{i}(\la).
\end{equation}
Next, the polynomial matrix $Q(\la)$ is expressed in the form of \eqref{rationalmatrix},  i.e., $Q(\la):= \displaystyle\sum_{i=0}^{k-1}(A_{i}-\la B_{i})f_{i}(\la)$, assuming the functions $f_i(\la)$ are polynomials, with $f_0(\la)=1$, and each $r_{i}(\la)$ is written in generalized state-space form, that is,
\begin{equation}\label{rational}
R(\la)= \displaystyle\sum_{i=0}^{k-1}(A_{i}-\la B_{i})f_{i}(\la) +\displaystyle\sum_{i=1}^{s}(C_{i}-\la D_{i})a_{i}^{T}(E_{i}-\la F_{i})^{-1}b_{i},
\end{equation}
with $a_{i}=\left[g_i(z_{1}^{i})w_{1}^{i}\quad \cdots \quad g_i(z_{\ell_{i}}^{i})w_{\ell_{i}}^{i}\right]^{T}\in\C^{\ell_{i}}$, $b_{i}=\left[1\quad 0 \quad \cdots \quad 0\right]^{T} \in\C^{\ell_{i}}$ and $\ell_{i}\times \ell_{i}$ matrices
$$E_{i}=\left[\begin{array}{ccccc}
w_{1}^{i} & w_{2}^{i} & \cdots & w_{\ell_{i}-1}^{i} & w_{\ell_{i}}^{i} \\
-z_{1}^{i} & z_{2}^{i} \\
& -z_{2}^{i} & \ddots \\
& & \ddots & z_{\ell_{i}-1}^{i} \\
& & & -z_{\ell_{i}-1}^{i} & z_{\ell_{i}}^{i}
\end{array}\right]\text{  and  }F_{i}=\left[\begin{array}{ccccc}
0 & 0 & \cdots & 0 & 0 \\
-1 & 1 \\
& -1 & \ddots \\
& & \ddots & 1 \\
& & & -1 & 1
\end{array}\right]. $$
The linearization constructed in \cite{automatic} for $R(\la)$ is the following.

\begin{deff}\cite[Definition 3.2]{automatic}(CORK linearization for AAA rational approximation)\label{corklinearization}
	Let $R(\la)$ be a rational matrix as in \eqref{rational}. Consider $b:=[b_{1}^{T}\quad \cdots \quad b_{s}^{T}]^{T}$ and $E-\la F:=\diag(E_{1}-\la F_{1},\ldots,E_{s}-\la F_{s})$. Then a CORK linearization for $R(\la)$ is
	$$\begin{small} \mathcal{L}_{R}(\la)=\left[\begin{array}{c|c}\begin{array}{ccc}
	A_{0}-\la B_{0} & \cdots & A_{k-1}-\la B_{k-1}
	\end{array} & \begin{array}{ccc}
	a_{1}^{T}\otimes (C_{1}-\la D_{1}) & \cdots & a_{s}^{T}\otimes (C_{s}-\la D_{s})
	\end{array} \\ \hdashline[2pt/2pt] \phantom{\Big|}
	(X-\la Y)\otimes I_{n} & 0 \\ \hline \phantom{\Big|}
	\begin{array}{cccccccccccc}  -b\otimes I_{n} & & &  & & 0 & & & & \end{array} & (E-\la F)\otimes I_{n}
	\end{array}\right]\end{small}%
	$$
	where $\begin{small}\left[\begin{array}{c}\begin{array}{ccc}
	A_{0}-\la B_{0} & \cdots & A_{k-1}-\la B_{k-1}
	\end{array} \\ \hdashline[2pt/2pt] \phantom{\Big|}
	(X-\la Y)\otimes I_{n} \end{array}\right]\end{small}$ is any CORK linearization of $Q(\la).$
\end{deff}
By using Theorem \ref{th:1blockfullrank}, we study in Theorem \ref{linautomatic} the structure of $ \mathcal{L}_{R}(\la)$ as linearization of $R(\la).$

\begin{theo}\label{linautomatic} Let $R(\la)$ be a rational matrix as in \eqref{rational}, and let $\mathcal{L}_{R}(\la)$ be the matrix pencil in Definition \ref{corklinearization}. Let $\Omega\subseteq\C$ be nonempty. If $\mathcal{L}_{R}(\la)$, viewed as a polynomial system matrix with state matrix $(E-\la F)\otimes I_n$, is minimal in $\Omega$ then $\mathcal{L}_{R}(\la)$ is a block full rank linearization of $R(\la)$ in $\Omega$ with state matrix $(E-\la F)\otimes I_n$.
\end{theo}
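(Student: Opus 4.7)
The plan is to apply Theorem \ref{th:1blockfullrank} to $\mathcal{L}_R(\la)$, viewed in the generalized form \eqref{eq:linearization22} in which the state matrix occupies the bottom-right block. First I would make the identifications $A(\la) := (E-\la F)\otimes I_n$, $M(\la) := [A_0-\la B_0\;\; \cdots\;\; A_{k-1}-\la B_{k-1}]$, $K_1(\la) := (X-\la Y)\otimes I_n$, $K_2(\la)$ empty, $B(\la) := [-b\otimes I_n \;\; 0]$ (padded to match the width of $M(\la)$), and $-C(\la) := [a_1^T\otimes (C_1-\la D_1)\;\; \cdots\;\; a_s^T\otimes (C_s-\la D_s)]$, so that $\mathcal{L}_R(\la)$ fits the pattern of \eqref{eq:linearization22}. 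For the dual rational bases I would take $N_1(\la) := f(\la)^T \otimes I_n$, which is dual to $K_1(\la)$ because $(X-\la Y)f(\la) = 0$ and $X-\la Y$ has rank $k-1$ on $\C$, and $N_2(\la) := I_n$, as allowed by Remark \ref{rem:empty} since $K_2$ is empty.

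Next I would verify the full-row-rank hypotheses of Theorem \ref{th:1blockfullrank}. The pencil $K_1(\la)$ has full row rank on all of $\C$ by the CORK rank assumption on $X-\la Y$, and $N_1(\la)$ has full row rank on $\Omega$ since the normalization $f_0 \equiv 1$ forces its first $n$ columns to be $I_n$ wherever $f$ is defined. A short computation with the mixed-product property of the Kronecker product then gives $M(\la)\,N_1(\la)^T = \sum_{i=0}^{k-1}(A_i-\la B_i)f_i(\la) = Q(\la)$ and $C(\la) A(\la)^{-1} B(\la)\, N_1(\la)^T = \sum_{i=1}^s (C_i - \la D_i)\, r_i(\la)$, where $r_i(\la) = a_i^T(E_i-\la F_i)^{-1}b_i$ is the $i$-th AAA approximant; summing yields exactly $R(\la)$, so the transfer-function identity $R(\la) = N_2(\la)[M(\la) + C(\la) A(\la)^{-1} B(\la)]\,N_1(\la)^T$ of \eqref{eq:rat_matrix} holds.

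The main obstacle, and really the only step that requires care, is translating the standing hypothesis (minimality of $\mathcal{L}_R(\la)$ as a polynomial system matrix with state matrix $(E-\la F)\otimes I_n$) into the specific rank condition \eqref{minimalitycondition} of Theorem \ref{th:1blockfullrank}. Since $N_2(\la) = I_n$, the left identity of \eqref{minimalitycondition} is precisely the left identity expressing minimality of the polynomial system matrix. For the right identity, I would compute $B(\la_0)\,N_1(\la_0)^T = -b \otimes I_n$ using $f_0(\la_0) = 1$, and then observe that adjoining $-b\otimes I_n$ to $A(\la_0)$ yields the same rank as adjoining the full $B(\la_0) = [-b\otimes I_n\;\; 0]$, because the extra columns are zero. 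Thus the minimality hypothesis is equivalent to condition \eqref{minimalitycondition}, and Theorem \ref{th:1blockfullrank} (in its form allowing the state matrix to appear in an arbitrary regular submatrix, as in the remark preceding \eqref{eq:linearization22}) delivers the conclusion that $\mathcal{L}_R(\la)$ is a block full rank linearization of $R(\la)$ in $\Omega$ with state matrix $(E-\la F)\otimes I_n$.
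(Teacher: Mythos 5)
Your proposal is correct and follows essentially the same route as the paper: the same block identifications, the same dual basis $N_1(\la)=f(\la)^T\otimes I_n$ with $N_2(\la)=I_n$, and the same key observation that $B(\la)N_1(\la)^T=-b\otimes I_n$ (since $f_0\equiv 1$) turns the minimality hypothesis into condition \eqref{minimalitycondition} so that Theorem \ref{th:1blockfullrank} applies. The extra verifications you spell out (full row rank of $K_1$ and $N_1$, the transfer-function computation) are left implicit in the paper but are consistent with it.
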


\begin{proof}
	Set $M(\la):=\left[\begin{array}{ccc}
	A_{0}-\la B_{0} & \cdots & A_{k-1}-\la B_{k-1}
	\end{array}\right],$ $ C(\la):=-[
	a_{1}^{T}\otimes (C_{1}-\la D_{1}) \quad  \cdots \allowbreak \quad  a_{s}^{T}\otimes (C_{s}-\la D_{s})
	],$ $B:=[-b\otimes I_{n} \quad 0],$ $A(\la):=(E-\la F)\otimes I_{n},$ $K_1(\la):= (X-\la Y)\otimes I_{n},$ $N_1(\la):=(f(\la)\otimes I_{n})^{T},$ and $K_2(\la)$ empty. $\mathcal{L}_{R}(\la)$ being minimal in $\Omega$ implies that condition \eqref{minimalitycondition} is satisfied in $\Omega$ since $BN_1(\la)^T=-b\otimes I_n$  because $f_0(\la)=1$. Then, by Theorem \ref{th:1blockfullrank}, $\mathcal{L}_{R}(\la)$ is a linearization of
	$[M(\la)+C(\la)A(\la)^{-1}B](f(\la)\otimes I_{n})=R(\la)$ in $\Omega$ with state matrix $A(\la).$
\end{proof}

\begin{rem} \rm  Theorem \ref{linautomatic} also holds if $f(\la)$ is rational but, in such a case, we need the extra hypothesis of $f(\la)$ being defined in $\Omega$.
\end{rem}

According to Theorem \ref{linautomatic}, we need minimality on $\mathcal{L}_{R}(\la)$ to be a linearization of the rational matrix $R(\la)$. In the following Theorem \ref{suff_cond}, we give sufficient mild conditions for $\mathcal{L}_{R}(\la)$ to be minimal in $\C$ in the case the rational approximants $r_i (\la)$ do not share the same support points and weights.

\begin{theo}\label{suff_cond} Assume that, for $i=1,\ldots,s$, the rational functions $r_{i}(\la)$ in \eqref{aproxmatrix} are represented as in \eqref{quotient} and that this representation is irreducible. Let $\mathcal{L}_{R}(\la)$ be the matrix pencil in Definition \ref{corklinearization}. If the pencils $C_i-\la D_i$ and $E_i-\la F_i$ are regular for $i=1,\ldots,s$ and the following conditions hold
	\begin{itemize}
		\item [\rm(a)] $C_i-\la D_i$ and $E_i-\la F_i$ have no finite eigenvalues in common for $i=1,\ldots,s$, and
		\item [\rm(b)] $E_i-\la F_i$ and $E_j-\la F_j$ with $i\neq j$ have no finite eigenvalues in common  for $i,j=1,\ldots,s$,
	\end{itemize}		
	then $\mathcal{L}_{R}(\la)$, viewed as a polynomial system matrix with state matrix $(E-\la F)\otimes I_n$, is minimal in $\C$.
\end{theo}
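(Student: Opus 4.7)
The plan is to verify directly the two Rosenbrock minimality rank conditions
\[
\rank\!\begin{bmatrix} A(\la_0)\\ -C(\la_0)\end{bmatrix}=\rank\![\,A(\la_0)\;\;B(\la_0)\,]=\sum_{i=1}^{s}\ell_i n
\]
for $\mathcal{L}_R(\la)$ viewed as a polynomial system matrix with state matrix $A(\la)=(E-\la F)\otimes I_n$, for every $\la_0\in\C$. Reading off the $B$ and $C$ blocks from Definition \ref{corklinearization} and discarding zero rows/columns that do not affect rank, these two conditions reduce to
\[
\rank\!\begin{bmatrix}(E-\la_0 F)\otimes I_n\\ [\,a_1^{T}\otimes (C_1-\la_0 D_1)\;\cdots\;a_s^{T}\otimes (C_s-\la_0 D_s)\,]\end{bmatrix}=\sum_{i=1}^{s}\ell_i n
\]
and $\rank\![\,(E-\la_0 F)\otimes I_n,\;-b\otimes I_n\,]=\sum_{i=1}^{s}\ell_i n$, respectively.

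For the first (column-rank) identity, I would take a right nullvector $x=(x_1^T,\ldots,x_s^T)^T$ and, via the standard Kronecker-vec identity, matricize each $x_i$ as $X_i\in\C^{n\times \ell_i}$. The two vector equations then become $X_i(E_i-\la_0 F_i)^T=0$ for each $i$ and $\sum_{i=1}^{s}(C_i-\la_0 D_i)X_i a_i=0$. Since $E-\la F=\diag(E_i-\la F_i)$ has regular diagonal blocks, hypothesis (b) forces $\la_0$ to be a finite eigenvalue of at most one of them; for every other index $j$, the block $E_j-\la_0 F_j$ is invertible and hence $X_j=0$. For the remaining index $i$ (if any), hypothesis (a) together with regularity of $C_i-\la D_i$ makes $C_i-\la_0 D_i$ invertible, so the second equation reduces to $X_i a_i=0$. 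Irreducibility of $r_i=p_i/q_i$ then lets me invoke Proposition \ref{minimality_AAA} on the scalar polynomial system matrix $P_i(\la)$ in (\ref{polsysmat_escalarrational}): it supplies the full column rank of $\left[\begin{smallmatrix}E_i-\la_0 F_i\\ a_i^{T}\end{smallmatrix}\right]$, so every row of $X_i$, which both lies in the null space of $E_i-\la_0 F_i$ and is annihilated by $a_i^T$, must vanish, and therefore $X_i=0$.

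The second (row-rank) identity is handled by a symmetric argument that, notably, does not require hypothesis (a). A left nullvector $(y_1,\ldots,y_s)$, matricized as $Y_i\in\C^{n\times \ell_i}$, satisfies $Y_i(E_i-\la_0 F_i)=0$ for every $i$ and $\sum_i Y_i b_i=0$. Hypothesis (b) again zeroes every block except possibly one, say $Y_i$, and since $b_i=e_1$ the remaining constraint reads ``the first column of $Y_i$ is zero''. Under the irreducibility hypothesis, Proposition \ref{minimality_AAA} delivers the full row rank of $[E_i-\la_0 F_i,\;-b_i]$ at $\la_0$, so every row of $Y_i$, lying in the left null space of $E_i-\la_0 F_i$ with vanishing first entry, must be zero; hence $Y_i=0$.

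The substantive work is the Kronecker-vec bookkeeping needed to decouple the global rank problem into $s$ independent scalar AAA blocks. Once that decoupling is made, hypothesis (b) reduces the analysis to a single block, hypothesis (a) removes any interaction between $C_i-\la D_i$ and $E_i-\la F_i$, and the irreducibility hypothesis closes the argument via Proposition \ref{minimality_AAA}. No step is harder than manipulating Kronecker-vec identities together with the scalar minimality result already proved in the paper.
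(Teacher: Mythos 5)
Your proof is correct and follows essentially the same route as the paper: both arguments reduce the two Rosenbrock rank conditions to the scalar minimality result of Proposition \ref{minimality_AAA} applied blockwise, with hypothesis (a) supplying invertibility of $C_i-\la_0 D_i$ and hypothesis (b) decoupling the diagonal blocks of $E-\la F$. The only differences are presentational — you use Kronecker--vec matricization and treat general $s$ explicitly, whereas the paper factors the relevant matrix for $s=1$ and merely remarks that condition (b) handles $s>1$ — so if anything your write-up is slightly more complete on that last point.
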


\begin{proof}  Assume first that $s=1$. Then notice that $\mathcal{L}_{R}(\la)$ is minimal in $\C$ if the pencil
	$$S(\la):=\left[\begin{array}{c|c}0 &
	a_{1}^{T}\otimes (C_{1}-\la D_{1}) \\ \hline \phantom{\Big|}
	-b_1\otimes I_{n} & (E_1-\la F_1)\otimes I_{n}
	\end{array}\right],$$ considered as a polynomial system matrix with state matrix $(E_1-\la F_1)\otimes I_n$, is minimal in $\C$.
	Since $r_{1}(\la)$ is irreducible, we have, by Proposition \ref{minimality_AAA}, that the submatrix $\left[\begin{array}{c|c}
	-b_1\otimes I_{n} & (E_1-\la F_1)\otimes I_{n}
	\end{array}\right]$ has full row rank for all $\la\in\C$. Then we only have to prove that the submatrix $H(\la):=\left[\begin{array}{c}
	a_{1}^{T}\otimes (C_{1}-\la D_{1}) \\ \hline \phantom{\Big|}
	(E_1-\la F_1)\otimes I_{n}
	\end{array}\right]$ has full column rank for all $\la\in\C$. By contradiction, assume that $H(\la_0)$ has no full column rank for some $\la_0\in\C$. Notice that, in such a case, $\la_0$ must be an eigenvalue of $E_1-\la F_1$ since, otherwise, $H(\la_0)$ would have full column rank. In addition, there exists a nonzero vector $x$ such that $H(\la_0)x=0$. Now we write
	\begin{equation}\label{prod}
	H(\la_0)x= \left[\begin{array}{cc}
	C_{1}-\la_0 D_{1} & 0 \\  \phantom{\Big|}
	0 &  I_{\ell_1 n}
	\end{array}\right] \left[\begin{array}{c}
	a_{1}^{T}\otimes I_n\\ \hline \phantom{\Big|}
	(E_1-\la_0 F_1)\otimes I_{n}
	\end{array}\right]x=0,
	\end{equation}
	and define the vector $\begin{bmatrix}
	y_1 \\y_2
	\end{bmatrix}:=\left[\begin{array}{c}
	a_{1}^{T}\otimes I_n\\ \hline \phantom{\Big|}
	(E_1-\la_0 F_1)\otimes I_{n}
	\end{array}\right]x$, which is nonzero since $x\neq 0$ and the matrix $\left[\begin{array}{c}
	a_{1}^{T}\otimes I_n\\ \hline \phantom{\Big|}
	(E_1-\la_0 F_1)\otimes I_{n}
	\end{array}\right]$ has full column rank by Proposition \ref{minimality_AAA}. Moreover, by \eqref{prod}, we have that $y_2=0$ and, thus, $(C_{1}-\la_0 D_{1} )y_1=0$ with $y_1\neq 0$. Therefore, $\la_0$ is an eigenvalue of $C_{1}-\la D_{1} $, which is a contradiction by condition $\rm(a)$. Finally, if $s>1$ we have to take into account condition $\rm(b)$ and the result follows.
\end{proof}

\begin{rem}\label{comment_omega}\rm It is clear that if we consider the set $\Omega:=\{\la\in\C:E-\la F \text{ is invertible}\},$ then $\mathcal{L}_{R}(\la)$ is minimal in $\Omega$ and, by Theorem \ref{linautomatic}, $\mathcal{L}_{R}(\la)$ is a linearization of $R(\la)$ in $\Omega.$ However, in such a case we do not obtain any information about the poles of $R(\la)$ since they do not belong to $\Omega.$ For this particular choice of the set $\Omega,$ the fact that $\mathcal{L}_{R}(\la)$ is a linearization of $R(\la)$ in $\Omega$ can also be proved by considering $\mathcal{L}_{R}(\la)$ as a block full rank pencil of the form $$\mathcal{L}_{R}(\la):=\left[\begin{array}{c}M(\la) \\ \hdashline[2pt/2pt]K_1(\la)
	\end{array}\right],$$ with $$\begin{small} M(\la):=\left[\begin{array}{c|c}\begin{array}{ccc}
	A_{0}-\la B_{0} & \cdots & A_{k-1}-\la B_{k-1}
	\end{array} & \begin{array}{ccc}
	a_{1}^{T}\otimes (C_{1}-\la D_{1}) & \cdots & a_{s}^{T}\otimes (C_{s}-\la D_{s})
	\end{array}
	\end{array}\right],\end{small}$$ and by applying Remark \ref{rem_emptycasefinite}. For that, write $R(\la)$ as
	\begin{equation*}
	\begin{split}\label{rational2}
	R(\la)=\displaystyle\sum_{i=0}^{k-1}(A_{i}-\la B_{i})(f_{i}(\la)\otimes I_{n}) +\displaystyle\sum_{i=1}^{s}[a_{i}^{T}\otimes (C_{i}-\la D_{i})](R_{i}(\la)\otimes I_{n}),
	\end{split}
	\end{equation*} with $R_{i}(\la):=(E_{i}-\la F_{i})^{-1}b_{i},$ and consider the dual rational basis of $K_1(\la)$
	$$N_1(\la):=[f_{0}(\la)\quad \cdots \quad f_{k-1}(\la) \quad | \quad R_{1}(\la)^{T} \quad \cdots \quad R_{s}(\la)^{T}]\otimes I_n.$$
	Then, $\mathcal{L}_{R}(\la)$ is a linearization of $R(\la)$ in $\Omega$ with empty state matrix. On the other hand, if $\mathcal{L}_{R}(\la)$ (considering the partition with state matrix $(E-\la F)\otimes I_n$) were minimal at those $\la_0\in\C$ such that $E-\la_0 F$ is singular then $\mathcal{L}_{R}(\la)$ would be a linearization of $R(\la)$ in $\C$ with state matrix $(E-\la F)\otimes I_n$. That means that the zeros of $\mathcal{L}_{R}(\la)$ would be the zeros of $R(\la)$, and the zeros of $(E-\la F)\otimes I_n$ would be the poles of $R(\la),$ together with their partial multiplicities. This happens, for instance, under the conditions of Theorem \ref{suff_cond}.
\end{rem}

\begin{rem}\label{notatinfinity} \rm In Remark \ref{comment_omega}, we consider $\mathcal{L}_{R}(\la)$ from two different points of view: as a block full rank pencil, $\left[\begin{array}{c}M(\la) \\ \hdashline[2pt/2pt]K_1(\la)
		\end{array}\right],$ and as a polynomial system matrix with state matrix $(E-\la F)\otimes I_n$. In the former case, $\mathcal{L}_{R}(\la)$ is not in general a linearization at infinity of $R(\la)$ since $\rev_1 K_1(\la)$ has not full row rank at $0$. In particular, \cite[Theorem 5.5]{local} can not be applied and there is not always an integer $g$ such that $\rev_1 \mathcal{L}_{R}(\la)$ is equivalent at $0$ to $\diag (\rev_g R(\la),I_{k(n-1)+\sum_{i=1}^s \ell_{i}n})$. It is not difficult to construct examples where such a $g$ does not exist. In the latter case, $\mathcal{L}_{R}(\la)$ is not a linearization at infinity since $\rev_1 \mathcal{L}_{R}(\la)$ is not minimal at $0$. Both cases are due to the fact that the matrix $\rev_1 (E-\la F)$ has not full row rank at zero since $ F$ is singular.
\end{rem}

\subsection{Low-rank structure}	

Low-rank structures are exploited in \cite{automatic} for constructing smaller linearizations that allow more efficient computations. In particular, a trimmed linearization is constructed if the matrix coefficients $C_{i}-\la D_{i}$ in \eqref{nonlinearmatrix} have low rank. For this purpose, write
\begin{equation}\label{lowrankcoeff}
C_{i}-\la D_{i}=(\widetilde{C}_{i}-\la \widetilde{D}_{i})\widetilde{Z}_{i}^{*},
\end{equation}
with $\widetilde{C}_{i},\widetilde{D}_{i},\widetilde{Z}_{i}\in\C^{n\times k_{i}},$ and $\widetilde{Z}_{i}^{*}\widetilde{Z}_{i}=I_{k_{i}}.$ In several applied problems  this type of structure appears with $k_{i}\ll n$ \cite{nlep, automatic}. By using the expression \eqref{lowrankcoeff} for the matrix coefficients, the matrix $R(\la)$ in \eqref{rational} can be written as:
\begin{equation}
\begin{split}\label{rationallowrank}
R(\la) & =  \displaystyle\sum_{i=0}^{k-1}(A_{i}-\la B_{i})f_{i}(\la) +\displaystyle\sum_{i=1}^{s}(\widetilde{C}_{i}-\la \widetilde{D}_{i})\widetilde{Z}_{i}^{*}a_{i}^{T}(E_{i}-\la F_{i})^{-1}b_{i} \\
& = \displaystyle\sum_{i=0}^{k-1}(A_{i}-\la B_{i})(f_{i}(\la)\otimes I_{n}) +\displaystyle\sum_{i=1}^{s}[a_{i}^{T}\otimes (\widetilde{C}_{i}-\la \widetilde{D}_{i})]((E_{i}-\la F_{i})^{-1}b_{i}\otimes I_{k_{i}})\widetilde{Z}_{i}^{*}.
\end{split}
\end{equation}
Then, the trimmed linearization $\widetilde{\mathcal{L}}_{R}(\la)$ for $R(\la)$ constructed in \cite{automatic} is the following.

\begin{deff}(Trimmed CORK linearization for AAA rational approximation)\label{trimmedcork} Let $R(\la)$ be a rational matrix as in \eqref{rationallowrank}. Consider the matrices
	\begin{equation*}
	\begin{split}
	Z & :=\left[\begin{array}{ccc}
	-\widetilde{Z}_{1}(b_{1}^{*}\otimes I_{k_{1}})&
	\cdots &
	-\widetilde{Z}_{s}(b_{s}^{*}\otimes I_{k_{s}})\end{array}\right],\\   E & :=\diag(E_{1}\otimes I_{k_{1}},\ldots,E_{s}\otimes I_{k_{s}})\text{, and }\\ F & :=\diag(F_{1}\otimes I_{k_{1}},\ldots,F_{s}\otimes I_{k_{s}}).
	\end{split}
	\end{equation*}
	Then a trimmed CORK linearization for $R(\la)$ is
	$$\begin{small} \widetilde{\mathcal{L}}_{R}(\la)=\left[\begin{array}{c|c}\begin{array}{ccc}
	A_{0}-\la B_{0} & \cdots & A_{k-1}-\la B_{k-1}
	\end{array} & \begin{array}{ccc}
	a_{1}^{T}\otimes (\widetilde{C}_{1}-\la \widetilde{D}_{1}) & \cdots & a_{s}^{T}\otimes (\widetilde{C}_{s}-\la \widetilde{D}_{s})
	\end{array} \\ \hdashline[2pt/2pt] \phantom{\Big|}
	(X-\la Y)\otimes I_{n} & 0 \\ \hline \phantom{\Big|}
	\begin{array}{cccccccccccc}  & Z^{*} & & & & & &   0 & &  \end{array} & E-\la F
	\end{array}\right], \end{small}
	$$
	where $\begin{small}\left[\begin{array}{c}\begin{array}{ccc}
	A_{0}-\la B_{0} & \cdots & A_{k-1}-\la B_{k-1}
	\end{array} \\ \hdashline[2pt/2pt] \phantom{\Big|}
	(X-\la Y)\otimes I_{n} \end{array}\right]\end{small}$ is any CORK linearization of $Q(\la).$
\end{deff}

Notice that the linearization $\mathcal{L}_{R}(\la)$ in Definition \ref{corklinearization} has size $(kn+\sum_{i=1}^s \ell_{i}n)\times (kn+\sum_{i=1}^s \ell_{i}n)$ whereas the trimmed pencil $\widetilde{\mathcal{L}}_{R}(\la)$ in Definition \ref{trimmedcork} has size $(kn+\sum_{i=1}^s \ell_{i} k_i)\times (kn+\sum_{i=1}^s \ell_{i} k_i)$ with $k_{i}\ll n$ in several applications.

Analogous to what we did in Theorem \ref{linautomatic}, we study in Theorem \ref{linautomatictrimmed} the structure of $ \widetilde{\mathcal{L}}_{R}(\la)$ as linearization of $R(\la)$. The proof is omitted since it is analogous to that of Theorem \ref{linautomatic}.

\begin{theo}\label{linautomatictrimmed}
	Let $R(\la)$ be a rational matrix as in \eqref{rationallowrank}, and let $\widetilde{\mathcal{L}}_{R}(\la)$ be the matrix pencil in Definition \ref{trimmedcork}. Let $\Omega\subseteq\C$ be nonempty. If $\widetilde{\mathcal{L}}_{R}(\la)$, viewed as a polynomial system matrix with state matrix $E-\la F$, is minimal in $\Omega$ then $ \widetilde{\mathcal{L}}_{R}(\la)$ is a block full rank linearization of $R(\la)$ in $\Omega$ with state matrix $E-\la F$.
\end{theo}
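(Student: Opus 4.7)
The plan is to mimic the proof of Theorem \ref{linautomatic} by reinterpreting $\widetilde{\mathcal{L}}_R(\la)$ under the block partition prescribed in \eqref{eq:linearization22} (state matrix placed in the bottom-right) and then invoking Theorem \ref{th:1blockfullrank}. Up to a harmless permutation of block rows and columns this brings $\widetilde{\mathcal{L}}_R(\la)$ into the shape \eqref{eq:linearization}, so Theorem \ref{th:1blockfullrank} applies directly.

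First, I would set
\begin{align*}
M(\la) &:= \left[\,A_{0}-\la B_{0}\ \cdots\ A_{k-1}-\la B_{k-1}\,\right], \\
-C(\la) &:= \left[\,a_{1}^{T}\otimes (\widetilde{C}_{1}-\la \widetilde{D}_{1})\ \cdots\ a_{s}^{T}\otimes (\widetilde{C}_{s}-\la \widetilde{D}_{s})\,\right], \\
K_1(\la) &:= (X-\la Y)\otimes I_n, \qquad K_2(\la) \text{ empty}, \\
A(\la) &:= E-\la F, \qquad B := \left[\,Z^{*}\ \ 0\,\right],
\end{align*}
and take as dual rational bases $N_2(\la) := I_n$ (allowed by Remark \ref{rem:empty} because $K_2(\la)$ is empty) and $N_1(\la) := f(\la)^T\otimes I_n$, which is dual to $K_1(\la)$ in view of the relation $(X-\la Y)f(\la)=0$ from \eqref{linearrelation}.

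Second, I would verify the transfer-function identity \eqref{eq:rat_matrix}. Since $f_0(\la)=1$, one has $B\,N_1(\la)^T = Z^{*}$; then using the block-diagonal form $A(\la)^{-1}=\diag\big((E_i-\la F_i)^{-1}\otimes I_{k_i}\big)$, the standard Kronecker mixed-product formula, the scalar identity $r_i(\la)=a_i^T(E_i-\la F_i)^{-1}b_i$ following from \eqref{polsysmat_escalarrational}, and the low-rank factorization $(\widetilde{C}_i-\la\widetilde{D}_i)\widetilde{Z}_i^{*}=C_i-\la D_i$ from \eqref{lowrankcoeff}, a direct computation gives
\[
N_2(\la)\bigl[M(\la)+C(\la)A(\la)^{-1}B\bigr]N_1(\la)^T \;=\; Q(\la)+\sum_{i=1}^{s}(C_i-\la D_i)\,r_i(\la) \;=\; R(\la),
\]
matching \eqref{rationallowrank}.

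Finally, I would show that the minimality hypothesis on $\widetilde{\mathcal{L}}_R(\la)$ (viewed with state matrix $E-\la F$) implies condition \eqref{minimalitycondition} in $\Omega$. Since $N_2\equiv I_n$, the first rank equality of \eqref{minimalitycondition} coincides with the one in the hypothesis. For the second, the identity $B\,N_1(\la_0)^T = Z^{*}$ yields
\[
\rank\begin{bmatrix}A(\la_0) & B\,N_1(\la_0)^T\end{bmatrix}=\rank\begin{bmatrix}A(\la_0) & Z^{*}\end{bmatrix}=\rank\begin{bmatrix}A(\la_0) & Z^{*} & 0\end{bmatrix}=\rank\begin{bmatrix}A(\la_0) & B\end{bmatrix},
\]
because adjoining zero columns does not alter the rank; the hypothesis then fixes this common value to the state dimension $\sum_i \ell_i k_i$. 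An application of Theorem \ref{th:1blockfullrank} now delivers the conclusion. The only genuinely new bookkeeping compared with the proof of Theorem \ref{linautomatic} is carrying the low-rank factors $\widetilde{Z}_i^{*}$ through the Kronecker algebra in the transfer-function computation; I do not foresee any deeper obstacle.
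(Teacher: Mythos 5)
Your proposal is correct and follows exactly the route the paper intends: the paper omits this proof precisely because it is the same adaptation of the proof of Theorem \ref{linautomatic} that you carry out, with the identifications $A(\la)=E-\la F$, $B=[\,Z^{*}\ \ 0\,]$, $N_1(\la)=f(\la)^T\otimes I_n$, $N_2(\la)=I_n$, and the Kronecker/mixed-product computation showing $C(\la)A(\la)^{-1}BN_1(\la)^T=\sum_i r_i(\la)(C_i-\la D_i)$ via the low-rank factorization \eqref{lowrankcoeff}. The verification that minimality of $\widetilde{\mathcal{L}}_R(\la)$ yields condition \eqref{minimalitycondition} (using $BN_1(\la_0)^T=Z^{*}$ and that padding with zero columns preserves rank) is also exactly the analogue of the argument in Theorem \ref{linautomatic}.
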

\begin{rem}\rm
		As we discussed in Remark \ref{notatinfinity} for the matrix pencil $\mathcal{L}_{R}(\la),$ the trimmed CORK linearization
		$ \widetilde{\mathcal{L}}_{R}(\la)$ is not in general a linearization at infinity of $R(\la)$ either. The reason is that, in this case, the matrix $ F$ is also singular and $\rev_1 (E-\la F)$ has not full row rank at zero.
\end{rem}

\section{Conclusions }\label{sect:con}

Combining the theory in \cite{local} with a nontrivial extension of the structure of the strong block minimal bases linearizations introduced in \cite{strong}, we have constructed a new wide family of local linearizations of rational matrices that generalizes and includes most of the linearizations for rational matrices appearing in the literature. The linearizations in this family are called block full rank linearizations. Depending on the satisfied minimality conditions, a pencil in this family can be a linearization in a set of finite points and/or at infinity. If the minimality conditions are satisfied in the whole underlying field $\mathbb{F}$ and at infinity, simultaneously, then we can recover from block full rank linearizations the complete pole and zero information, finite and at infinity, of rational matrices. Linearizations at infinity are defined using the notion of grade and,  to determine the grade of block full rank linearizations at infinity, we use the notion of degree of rational matrices.

As an application, we use block full rank linearizations to study the structure of the linearizations developed in \cite{automatic} for solving rational eigenvalue problems coming from rational approximations of nonlinear eigenvalue problems. We provide sufficient mild conditions under which the pencils in \cite{automatic} are linearizations in $\mathbb{C}$.

\section*{Funding}

The authors F. M. Dopico and M. C. Quintana are supported by ``Ministerio de Econom\'ia, Industria y Competitividad (MINECO)" of Spain and ``Fondo
Europeo de Desarrollo Regional (FEDER)" of EU through grants MTM2015-65798-P and MTM2017-90682-REDT
and by the ``Proyecto financiado por la Agencia Estatal de Investigaci\'on (PID2019-106362GB-I00 / AEI / 10.13039/501100011033)".
The research of M. C. Quintana is also funded by the ``contrato predoctoral" BES-2016-076744 of MINECO. The second author, S. Marcaida, is supported by ``Ministerio de Econom\'ia, Industria y Competitividad (MINECO)'' of Spain and ``Fondo Europeo de Desarrollo Regional (FEDER)'' of EU through grant MTM2017-83624-P, and by UPV/EHU through grant PPGA20/10. This work was developed while P. Van Dooren held a ``Chair of Excellence UC3M - Banco de Santander'' at Universidad Carlos III de Madrid in the academic years 2017-2018 and 2019-2020.

\end{document}